\newtheorem{thm}{Theorem}[subsection]
\newtheorem{lm}[thm]{Lemma}
\newtheorem{rk}[thm]{Proposition}
\theoremstyle{definition}
\newtheorem{df}[thm]{Definition}
\newtheorem*{claim*}{Claim}
\renewcommand{\thethm}{%
  \ifnum\value{subsection}=0 
    \thesection
  \else
    \thesubsection
  \fi%
  .\arabic{thm}}
\let\int\relax
\DeclareMathOperator{\int}{int}
\DeclareMathOperator{\add}{add}
\let\epsilon\varepsilon
\let\phi\varphi
\let\le\leqslant
\let\leq\le
\let\ge\geqslant
\let\geq\ge
\title{Star operation, microscopic sets and porous sets}
\author[D. Perkowska]{Daria Perkowska}
\email{daria.perkowska@pwr.edu.pl}
\author[Sz. Żeberski]{Szymon Żeberski}
\email{szymon.zeberski@pwr.edu.pl}
\thanks{This work has been partially financed by grant {\bf 8211204601, MPK: 9130730000} from the Faculty of Pure and Applied Mathematics, Wrocław University of Science and Technology.
	\\
	AMS Classification: Primary: 03E75, 28A05, 54H05; Secondary: 03E17
	\\
	Keywords: algebraic sum, the Cantor space, operation *, microscopic set, porous set, strong measure zero set, strongly meager set.}
\address{Daria Perkowska, Szymon Żeberski, Faculty of Pure and Applied Mathematics, Wrocław University of Science and Technology, 50-370 Wrocław, Poland}
\date{}
\begin{document}

\maketitle

\begin{abstract}





This paper explores the interplay between star operations, microscopic sets, and porous sets. The study focuses on the Galvin-Mycielski-Solovay theorem, which characterizes strongly measure zero sets and their interactions with meager sets. Results include the investigation of the star operation \(\mathcal{F}^*\) and its properties. The paper also examines the relationship between porous sets and microscopic sets. Additionally, the work presents constructions of families \(\mathcal{F}\) in \(\mathcal{P}(\mathbb{Z})\), \(\mathcal{P}(\mathbb{Z}^\omega)\), and \(\mathcal{P}(2^\omega)\) that satisfy \(\mathcal{F} = \mathcal{F}^*\). Theorems and lemmas are provided to establish conditions under which \(\mathcal{F}^{**} = \mathcal{F}\) and to analyze the implications of the Borel Conjecture and its dual. The paper concludes with a discussion of microscopic sets and their properties, including their interactions with porous sets and the non-equivalence of certain classes of sets.

\end{abstract}

\section{Introduction and notation}

We will be investigating subsets of the Cantor space $2^\omega$ and families of such subsets. The basis of the Cantor space is formed by clopen sets, denoted by $[\beta]$, where $\beta$ is a finite sequence of $0$ and $1$. 
\[
[\beta]=\{x\in 2^\omega:\ \beta\subseteq x\}.
\]
By $\lambda$ we will denote the Lebesgue measure. We will use standard notation following from \cite{BBB}.

\begin{df}
We say that a subset $\mathcal{I} \subseteq P(X)$ is an ideal if the following properties are satisfied:
\begin{itemize}
    \item $\emptyset \in \mathcal{I}$,
    \item When $A \in \mathcal{I}$ and $B \subseteq X$, then $B \subseteq A \Longrightarrow B \in \mathcal{I}$,
    \item If $A,B \in \mathcal{I}$ then $A \cup B \in \mathcal{I}$.
\end{itemize}
\end{df}

\begin{df}
We say that a subset $\mathcal{I} \subseteq P(X)$ is a $\sigma$-ideal if it is an ideal and it is closed under countable unions.
\end{df}

\begin{df}
    We say that an ideal $\mathcal{I}\subseteq\mathcal{P}(X) $ is proper if $X\notin\mathcal{I}$.
\end{df}

\begin{df}

     Let $\langle G,+, \varrho\rangle$ be an abelian metric unbounded group with metric $\varrho$.  Let
$$
\mathcal{BD}=\{X \subseteq G: \operatorname{diam}(X)<\infty\}
$$
be an ideal of bounded sets.
\end{df}

Other important families are the family of sets of Lebesgue's measure zero, which we shall denote by $\mathcal{N}$, and the family of meager sets denoted by $\mathcal{M}$. The $\sigma$-ideal of all countable subsets will be denoted by $Count$.  By $K_\sigma$ we denote the ideal of countable unions of compact sets in the Baire space $\omega^\omega$ or $\mathbb{Z}^\omega$. 

If $(X,+)$ is (an abelian) group then
for $A, B \in \mathcal{P}(X)$, 
we will write 
$$A+B= \{a+b: a \in A, b \in B\}.$$

We define cardinal invariants of an ideal $\mathcal{I}$ by: 

$$add(\mathcal{I})=\min \{|\mathcal{A}|:\ \mathcal{A} \subseteq \mathcal{I} \wedge \bigcup \mathcal{A} \notin \mathcal{I} \},$$
$$cov(\mathcal{I})=\min \{|A|: A \subseteq \mathcal{I} \wedge \bigcup A =X \},$$
$$cof(\mathcal{I})=\min \{|\mathcal{B}|: \mathcal{B} \subseteq \mathcal{I} \wedge (\forall A \in \mathcal{I})(\exists B \in \mathcal{B})(A \subseteq B) \}.$$
\section{Operation *}
The operation * was first defined and investigated in \cite{Seredyski1989SomeOR} by Seredyński. In particular, he proved that for the ideal of bounded sets $\mathcal{BD}$  we have $\mathcal{BD}=\mathcal{BD}^{**}.$

\begin{df}
For a family $\mathcal{F}\subseteq \mathcal{P}(X)$ let:

$$\mathcal{F}^{*}=\{A\subseteq X:\ \forall F\in \mathcal{F} \;\; A+F \neq X \}.$$

\end{df}
In any situation, the following 
equivalence occurs useful.

$$A+F\neq X\; \Longleftrightarrow\;  \exists y\; (A+y)\cap F =\emptyset$$

It was later studied among others by Pawlikowski and Sabok who showed in \cite{Pawlikowski2008-PAWTS} that $Count = Count^{**}$. Horbaczewska and Lindner  in \cite{article} assuming CH investigated conditions under which $\mathcal{I} = \mathcal{I}^{**}$ for certain ideals $\mathcal{I}$. Moreover, Solecki in \cite{solecki}constructed a $\sigma$-ideal $\mathcal{I}$ satisfying $\mathcal{I}^* = Count$.

Here are some well-known facts about operation $^*.$

\begin{thm}
    For any $\mathcal{F,G} \subseteq \mathcal{P}(X)$ we have:
    \begin{itemize}
    \item  $\mathcal{G}\subseteq\mathcal{F}^{*} \Rightarrow \mathcal{F}\subseteq\mathcal{G}^{*} $
    \item $\mathcal{F}\subseteq\mathcal{F}^{**} $
    \item $\mathcal{G}\subseteq\mathcal{F} \Longrightarrow \mathcal{F}^*\subseteq\mathcal{G}^{*} $
    \item $\mathcal{F}^*=\mathcal{F}^{***}$
    \item $\mathcal{F}^{*}$ is closed under taking subsets and translation invariant.
\end{itemize}
\end{thm}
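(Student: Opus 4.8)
The plan is to verify each of the five bullet points in turn, using only the definition of $\mathcal{F}^*$ together with the basic equivalence $A+F\neq X \Longleftrightarrow \exists y\,(A+y)\cap F=\emptyset$.

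\emph{First bullet.} Assume $\mathcal{G}\subseteq\mathcal{F}^*$. To show $\mathcal{F}\subseteq\mathcal{G}^*$, fix $F\in\mathcal{F}$ and $G\in\mathcal{G}$; I must check $G+F\neq X$. Since $G\in\mathcal{G}\subseteq\mathcal{F}^*$, by definition $G+F'\neq X$ for every $F'\in\mathcal{F}$, in particular for $F'=F$. The only subtlety is that $G+F$ and $F+G$ must be recognized as the same set, which holds because $X$ is abelian. \emph{Second bullet.} $\mathcal{F}\subseteq\mathcal{F}^{**}$ is the special case $\mathcal{G}=\mathcal{F}^*$ of the first bullet, since trivially $\mathcal{F}^*\subseteq\mathcal{F}^*$. \emph{Third bullet.} Assume $\mathcal{G}\subseteq\mathcal{F}$ and fix $A\in\mathcal{F}^*$; to see $A\in\mathcal{G}^*$, note that $A+F\neq X$ for all $F\in\mathcal{F}$, hence in particular for all $F\in\mathcal{G}\subseteq\mathcal{F}$.

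\emph{Fourth bullet.} For $\mathcal{F}^*=\mathcal{F}^{***}$ I argue by double inclusion. Applying the second bullet to the family $\mathcal{F}^*$ gives $\mathcal{F}^*\subseteq(\mathcal{F}^*)^{**}=\mathcal{F}^{***}$. For the reverse inclusion, start from $\mathcal{F}\subseteq\mathcal{F}^{**}$ (second bullet) and apply the third bullet with the roles $\mathcal{G}=\mathcal{F}$, $\mathcal{F}=\mathcal{F}^{**}$: this yields $(\mathcal{F}^{**})^*\subseteq\mathcal{F}^*$, i.e. $\mathcal{F}^{***}\subseteq\mathcal{F}^*$. Combining the two inclusions gives equality. \emph{Fifth bullet.} Closure under subsets: if $A\in\mathcal{F}^*$ and $B\subseteq A$, then for each $F\in\mathcal{F}$ we have $B+F\subseteq A+F\neq X$, so $B+F\neq X$ and $B\in\mathcal{F}^*$. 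Translation invariance: if $A\in\mathcal{F}^*$ and $t\in X$, then for each $F\in\mathcal{F}$, using the equivalence pick $y$ with $(A+y)\cap F=\emptyset$; then $((A+t)+(y-t))\cap F=(A+y)\cap F=\emptyset$, so $(A+t)+F\neq X$ and $A+t\in\mathcal{F}^*$.

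None of these steps presents a genuine obstacle; the whole statement is a formal consequence of the definition, and the only things to be careful about are the bookkeeping of which family plays which role when invoking an earlier bullet (notably in the fourth bullet) and the implicit use of commutativity of $+$ in the first bullet. I would present the proof as five short paragraphs mirroring the bullets, deriving the second and fourth from the first and third rather than re-proving them from scratch.
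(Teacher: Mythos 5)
Your proof is correct in all five bullets, and the logical economy of deriving the second and fourth bullets from the first and third is sound: the first bullet is exactly the Galois-connection antitonicity that the rest of the theorem follows from formally. The paper itself states these as ``well-known facts'' and supplies no proof at all, so there is no textual argument to compare yours against; your filled-in argument is the standard one. Two small remarks: the commutativity of $+$ (which you flag) is indeed needed in the first bullet and is justified by the paper's standing assumption that $(X,+)$ is an abelian group; and for translation invariance a slightly more direct route is to observe that $(A+t)+F=(A+F)+t$, and a translate of a proper subset of $X$ is again proper, avoiding any appeal to the displayed equivalence.
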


\begin{rk}
If $\mathcal{I} \subseteq \mathcal{P}(X)$ is translation invariant $\sigma$-ideal, then $Count \subseteq \mathcal{I}^{*}$.
\end{rk}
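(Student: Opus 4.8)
\emph{Proof sketch.} The statement is immediate from the definitions once one unwinds them, so the plan is short. Fix an arbitrary countable set $A\subseteq X$ and an arbitrary $F\in\mathcal{I}$; it suffices to check the defining condition of $\mathcal{I}^{*}$, namely that $A+F\neq X$. (As always in this context we take $\mathcal{I}$ to be proper, i.e. $X\notin\mathcal{I}$; without this hypothesis the claim fails already because $\mathcal{I}^{*}=\{\emptyset\}$ whenever $X\in\mathcal{I}$ and $X\neq\emptyset$.)

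The key observation is that $A+F=\bigcup_{a\in A}(a+F)$ is a countable union of translates of the single set $F\in\mathcal{I}$. Since $\mathcal{I}$ is translation invariant, each $a+F$ belongs to $\mathcal{I}$; since $A$ is countable and $\mathcal{I}$ is a $\sigma$-ideal, the union $A+F$ again belongs to $\mathcal{I}$; and since $\mathcal{I}$ is proper, $A+F\neq X$. As $A$ was an arbitrary countable subset of $X$, this gives $Count\subseteq\mathcal{I}^{*}$. Alternatively, one may phrase the same computation through the displayed equivalence $A+F\neq X\iff \exists y\,(A+y)\cap F=\emptyset$: here $(A+y)\cap F=\emptyset$ means precisely $y\notin F-A=\bigcup_{a\in A}(F-a)$, the latter set lies in $\mathcal{I}$ for the same two reasons (translation invariance, then $\sigma$-additivity), hence is a proper subset of $X$, so a suitable $y$ exists.

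There is no real obstacle in this argument; it is purely a matter of combining translation invariance with countable additivity of the ideal. The only point that warrants a moment's attention is the degenerate behaviour when $\mathcal{I}$ is not proper (and the trivial case $A=\emptyset$, where $A+F=\emptyset\neq X$ as soon as $X\neq\emptyset$), which is why properness of $\mathcal{I}$ is assumed throughout — all the $\sigma$-ideals of interest here, such as $\mathcal{N}$, $\mathcal{M}$ and $K_\sigma$, being proper.
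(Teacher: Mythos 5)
Your argument is correct and is the natural one: write $A+F=\bigcup_{a\in A}(a+F)$, use translation invariance to put each translate in $\mathcal{I}$, use $\sigma$-additivity to put the countable union in $\mathcal{I}$, and conclude $A+F\neq X$ from properness. The paper states this remark without proof, so there is nothing to compare against in detail; your reasoning is exactly what the authors evidently have in mind. You also correctly spot a small gap in the statement as printed: properness of $\mathcal{I}$ is genuinely needed (if $X\in\mathcal{I}$ then $\mathcal{I}^{*}=\{\emptyset\}$ and the inclusion fails), and although the paper defines ``proper'' a few lines earlier, it omits the hypothesis from this remark. Flagging that is a worthwhile observation, not a flaw in your proof.
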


\begin{rk}\label{eeee}
$Count^*$ is the union of all proper translation invariant $\sigma$-ideals of subsets of $X$.
\end{rk}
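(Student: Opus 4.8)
The plan is to prove the two inclusions separately. Write $(X,+)$ for the ambient abelian group with neutral element $e$; unwinding the definition,
\[
Count^{*}=\{A\subseteq X:\ A+F\neq X\text{ for every countable }F\subseteq X\}.
\]
So the statement to be shown is that $A$ belongs to some proper translation invariant $\sigma$-ideal of subsets of $X$ if and only if $A+F\neq X$ for every countable $F$.

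For the inclusion $\supseteq$, I would take a proper translation invariant $\sigma$-ideal $\mathcal{I}$ and a set $A\in\mathcal{I}$ and show $A\in Count^{*}$. Given a countable $F=\{f_n:n\in\omega\}$, the set $A+F=\bigcup_{n\in\omega}(A+f_n)$ is a countable union of translates of $A$; each translate lies in $\mathcal{I}$ by translation invariance, hence $A+F\in\mathcal{I}$ since $\mathcal{I}$ is a $\sigma$-ideal, and therefore $A+F\neq X$ since $\mathcal{I}$ is proper. (Equivalently, one may invoke the previous Proposition, which gives $Count\subseteq\mathcal{I}^{*}$ for such $\mathcal{I}$, together with the first item of the Theorem, which converts $Count\subseteq\mathcal{I}^{*}$ into $\mathcal{I}\subseteq Count^{*}$.) Since any member of the union of all proper translation invariant $\sigma$-ideals lies in one such $\mathcal{I}$, this gives the inclusion.

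For the inclusion $\subseteq$, the key idea is to attach to each $A\in Count^{*}$ an explicit witnessing ideal, namely the smallest translation invariant $\sigma$-ideal containing $A$:
\[
\mathcal{I}_A=\{B\subseteq X:\ B\subseteq A+C\text{ for some countable }C\subseteq X\}.
\]
I would first verify the routine closure properties: $\emptyset\in\mathcal{I}_A$; $\mathcal{I}_A$ is downward closed by construction; it is closed under countable unions because $\bigcup_n (A+C_n)=A+\bigcup_n C_n$ and a countable union of countable sets is countable; and it is translation invariant because $(A+C)+x=A+(C+x)$ with $C+x$ countable. Taking $C=\{e\}$ shows $A\in\mathcal{I}_A$.

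The one point that uses the hypothesis — the crux of the argument rather than a real obstacle — is that $\mathcal{I}_A$ is \emph{proper}. Indeed, $X\in\mathcal{I}_A$ would mean $X=A+C$ for some countable $C$, so $X\notin\mathcal{I}_A$ is \emph{verbatim} the assertion $A\in Count^{*}$. Thus for $A\in Count^{*}$ the family $\mathcal{I}_A$ is a proper translation invariant $\sigma$-ideal containing $A$, whence $A$ belongs to the union of all such ideals. Combining the two inclusions gives the claimed equality; the whole proof is short, the only thing to keep track of being that in the $\supseteq$ direction properness of $\mathcal{I}$ is exactly what is needed, which is why I would spell out the one-line direct argument there.
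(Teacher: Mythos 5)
Your proof is correct, and since the paper states this remark without supplying a proof, there is nothing to compare it against; the argument you give is the natural one. Both directions are sound: the inclusion $\supseteq$ is the observation (already implicit in the preceding Proposition together with the antitone property $\mathcal{G}\subseteq\mathcal{F}^{*}\Rightarrow\mathcal{F}\subseteq\mathcal{G}^{*}$) that a proper translation invariant $\sigma$-ideal $\mathcal{I}$ satisfies $Count\subseteq\mathcal{I}^{*}$, hence $\mathcal{I}\subseteq Count^{*}$; and for $\subseteq$ your witnessing ideal $\mathcal{I}_A=\{B:\ B\subseteq A+C,\ C\text{ countable}\}$ is exactly the right object, with properness of $\mathcal{I}_A$ being a literal restatement of $A\in Count^{*}$. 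One cosmetic remark: you correctly flag that properness is the load-bearing hypothesis in the $\supseteq$ direction — indeed without it $X$ itself would lie in $\mathcal{I}$ but not in $Count^{*}$ — which is a point the paper's phrasing of the preceding Proposition glosses over.
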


As a consequence of \Cref{eeee} we have 

$$\mathcal{M} \neq Count^*,$$
$$\mathcal{N} \neq Count^*.$$

\begin{rk}
$Count^{*}$ is not an ideal. 
\end{rk}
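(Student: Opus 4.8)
The plan is to show that $Count^*$ fails the third ideal axiom: it is not closed under finite unions. We already know from \Cref{eeee} that $Count^*$ is the union of all proper translation invariant $\sigma$-ideals on $X$, and from the displayed consequences that both $\mathcal{M}$ and $\mathcal{N}$ are proper translation invariant $\sigma$-ideals that are (strictly) contained in $Count^*$. So every meager set and every null set belongs to $Count^*$. The idea is therefore to exhibit a meager set $A$ and a null set $B$ with $A \cup B = X$; then $A, B \in Count^*$ but $A \cup B = X \notin Count^*$ (note $X \notin Count^*$ because $X$ is not a member of any proper ideal, a fortiori not of $Count^*$ — or directly, $X + \{0\} = X$ so $X \notin \{0\}^* \supseteq Count^*$-style reasoning; one should state cleanly why $X\notin Count^*$).

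The key step is the classical decomposition of the Cantor space (or the real line) into a meager set and a null set whose union is the whole space. First I would recall or cite this: there is a partition $X = A \sqcup B$ with $A$ meager and $B$ of Lebesgue measure zero. On $2^\omega$ with the standard product measure this is entirely standard — one can build $B$ as a null $G_\delta$ set that is comeager, e.g. take a fast-increasing sequence $n_k$ and let $B = \bigcap_k \bigcup_{m \ge k} [\text{strings forcing a } 0 \text{ at position } n_m]$ arranged so the measure tends to $0$ but each open piece is dense, making $B$ comeager and null; then $A = X \setminus B$ is meager and $A \cup B = X$. I would present this briefly rather than in full detail, since it is well known.

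Having fixed such $A$ and $B$, the argument concludes: $A \in \mathcal{M} \subseteq Count^*$ and $B \in \mathcal{N} \subseteq Count^*$ by the consequence of \Cref{eeee}, while $A \cup B = X$. Since $X \notin Count^*$ (an ideal containing $X$ would be improper; more directly, $Count^*$ consists only of sets that lie in \emph{some} proper $\sigma$-ideal, and $X$ lies in none), the family $Count^*$ is not closed under finite unions, hence is not an ideal. The main obstacle — really the only subtlety — is making sure the meager/null decomposition is available in the specific group $X$ under consideration (it holds for $2^\omega$, $\R$, and the other groups appearing in the paper); if one wants a statement for an arbitrary translation invariant Polish group one would need to be slightly more careful, but for the spaces at hand the classical construction suffices.
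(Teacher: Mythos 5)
Your proof is correct and takes essentially the same route as the paper: both use \Cref{eeee} to get $\mathcal{M},\mathcal{N}\subseteq Count^*$ and then invoke the classical decomposition of $2^\omega$ into a meager set and a null set whose union is the whole space to defeat closure under finite unions. You additionally spell out why $X\notin Count^*$ and sketch the decomposition, but these are just details the paper leaves implicit.
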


\begin{proof}
From \Cref{eeee} we know that $\mathcal{M} \subseteq Count^{*}$ and $\mathcal{N} \subseteq Count^{*}$. But there exists $A \in \mathcal{N} $ and $B\in \mathcal{M}$ such that $ 2^{\omega} = A\cup B$. So $Count^{*}$ cannot be an ideal.

\end{proof}

We will try to find conditions under which $\mathcal{F}=\mathcal{F}^{**}$ holds. First, we start with a simple characterization.

\begin{lm}
    $\mathcal{F}=\mathcal{F}^{**}\Longleftrightarrow \exists \mathcal{A}\; \mathcal{F}=\mathcal{A}^{*}.$ 
\end{lm}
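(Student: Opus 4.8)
The plan is to prove both implications directly from the basic properties of the star operation listed in the theorem above, with the forward direction being essentially trivial and the backward direction requiring the identity $\mathcal{F}^* = \mathcal{F}^{***}$.

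For the implication ($\Rightarrow$), suppose $\mathcal{F} = \mathcal{F}^{**}$. Then taking $\mathcal{A} = \mathcal{F}^*$ we immediately get $\mathcal{A}^* = \mathcal{F}^{**} = \mathcal{F}$, so the witness exists. (One could equally take $\mathcal{A} = \mathcal{F}^{***} = \mathcal{F}^*$, but there is no need.)

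For the implication ($\Leftarrow$), suppose $\mathcal{F} = \mathcal{A}^*$ for some family $\mathcal{A} \subseteq \mathcal{P}(X)$. Apply the star operation twice: $\mathcal{F}^{**} = (\mathcal{A}^*)^{**} = \mathcal{A}^{***}$. By the identity $\mathcal{A}^{***} = \mathcal{A}^*$ from the theorem (applied with $\mathcal{A}$ in place of $\mathcal{F}$), we get $\mathcal{F}^{**} = \mathcal{A}^* = \mathcal{F}$, as desired.

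There is no real obstacle here; the only point worth care is to invoke $\mathcal{F}^* = \mathcal{F}^{***}$ as a stated fact rather than re-deriving it, and to keep track of which family the identity is being applied to. I would write the argument in two or three lines.

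\begin{proof}
($\Rightarrow$) If $\mathcal{F} = \mathcal{F}^{**}$, put $\mathcal{A} = \mathcal{F}^*$; then $\mathcal{A}^* = \mathcal{F}^{**} = \mathcal{F}$.

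($\Leftarrow$) Suppose $\mathcal{F} = \mathcal{A}^*$ for some $\mathcal{A} \subseteq \mathcal{P}(X)$. Applying the star operation twice and using $\mathcal{A}^{***} = \mathcal{A}^*$ we obtain
\[
\mathcal{F}^{**} = (\mathcal{A}^*)^{**} = \mathcal{A}^{***} = \mathcal{A}^* = \mathcal{F}. \qedhere
\]
\end{proof}
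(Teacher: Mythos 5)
Your proof is correct and follows essentially the same route as the paper's: the forward direction witnesses $\mathcal{A}=\mathcal{F}^*$, and the backward direction applies $^*$ twice and invokes the identity $\mathcal{A}^{***}=\mathcal{A}^*$. No differences worth noting.
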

\begin{proof}
    $(\Longrightarrow)$ Just take $\mathcal{A}=\mathcal{F}^{*}$.

    $(\Longleftarrow)$ We have that $\mathcal{F}=\mathcal{A}^{*}$. So $\mathcal{F}^{*}=\mathcal{A}^{**}$ and $\mathcal{F}^{**}=\mathcal{A}^{***}=\mathcal{A}^{*}=\mathcal{F}$.
    
\end{proof}

In \cite{article}, Horbaczewska and Lindner established the following lemma, which provides a useful criterion for determining when an ideal satisfies the equality $\mathcal{I} = \mathcal{I}^{**}$.

\begin{lm}\label{ccc}(Horbaczewska, Lindner) 
For any $\mathcal{F}\subseteq \mathcal{P}(X)$ the following conditions are equivalent:

\begin{itemize}

    \item $(\forall{A\notin \mathcal{F}}) (\mathcal{F} \cup \{A\})^* \neq \mathcal{F}^*$,
    \item $\mathcal{F}=\mathcal{F}^{**} $.
   
\end{itemize}
\end{lm}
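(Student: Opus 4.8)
The plan is to prove the two conditions equivalent by proving the contrapositive in both directions, using the already-established lemma that $\mathcal{F} = \mathcal{F}^{**}$ iff $\mathcal{F} = \mathcal{A}^*$ for some $\mathcal{A}$, together with the basic facts about $^*$ from the first theorem (antitonicity, $\mathcal{F} \subseteq \mathcal{F}^{**}$, and $\mathcal{F}^* = \mathcal{F}^{***}$).

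\textbf{From the first condition to $\mathcal{F} = \mathcal{F}^{**}$.} I would prove the contrapositive: assume $\mathcal{F} \neq \mathcal{F}^{**}$ and produce $A \notin \mathcal{F}$ with $(\mathcal{F} \cup \{A\})^* = \mathcal{F}^*$. Since $\mathcal{F} \subseteq \mathcal{F}^{**}$ always holds, the inequality gives some $A \in \mathcal{F}^{**} \setminus \mathcal{F}$; this is our witness, and $A \notin \mathcal{F}$ by construction. Now $\mathcal{F} \subseteq \mathcal{F} \cup \{A\} \subseteq \mathcal{F}^{**}$, so applying $^*$ (which is antitone) twice—or rather once—yields $(\mathcal{F}^{**})^* \subseteq (\mathcal{F}\cup\{A\})^* \subseteq \mathcal{F}^*$. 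But $(\mathcal{F}^{**})^* = \mathcal{F}^{***} = \mathcal{F}^*$, so both ends of the chain coincide and $(\mathcal{F} \cup \{A\})^* = \mathcal{F}^*$, as desired.

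\textbf{From $\mathcal{F} = \mathcal{F}^{**}$ to the first condition.} Again I would argue contrapositively: suppose the first condition fails, i.e.\ there is $A \notin \mathcal{F}$ with $(\mathcal{F} \cup \{A\})^* = \mathcal{F}^*$. Taking $^*$ of both sides gives $(\mathcal{F} \cup \{A\})^{**} = \mathcal{F}^{**}$. Since $A \in \mathcal{F} \cup \{A\} \subseteq (\mathcal{F} \cup \{A\})^{**} = \mathcal{F}^{**}$, we get $A \in \mathcal{F}^{**}$. Combined with $A \notin \mathcal{F}$, this shows $\mathcal{F} \neq \mathcal{F}^{**}$.

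\textbf{Main obstacle.} There is no deep obstacle here; the proof is a diagram chase through the monotonicity and idempotency properties of $^*$. The only point requiring care is making sure the direction of the inclusions is tracked correctly when applying the antitone operation $^*$, and remembering to invoke $\mathcal{F}^{***} = \mathcal{F}^*$ at the right moment so that the squeezed chain of inclusions collapses to an equality. I would present both directions compactly as a single short proof.
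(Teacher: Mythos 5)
Your proof is correct, and since the paper attributes this lemma to Horbaczewska and Lindner (\cite{article}) and states it without proof, there is no in-paper argument to compare against. Both directions of your contrapositive argument chase cleanly through antitonicity, $\mathcal{F}\subseteq\mathcal{F}^{**}$, and $\mathcal{F}^{***}=\mathcal{F}^{*}$, and the inclusions are tracked in the right direction. One small remark: you announce in your preamble that you will use the preceding lemma ($\mathcal{F}=\mathcal{F}^{**}$ iff $\mathcal{F}=\mathcal{A}^{*}$ for some $\mathcal{A}$), but your actual argument never invokes it; the three basic properties of $^{*}$ suffice, so that sentence could simply be dropped.
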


Now, applying \Cref{ccc}, we can establish the following theorem. A related version was proved in \cite{article}, though in the setting of $\mathbb{R}$ and under the assumption of the Continuum Hypothesis.

\begin{thm} \label{bbbbb}
    If $\mathcal{J} \subseteq \mathcal{P}(2^{\omega})$ is a translation  and reflection invariant proper $\sigma$-ideal with $cof(\mathcal{J})\leqslant \mathfrak{c}$, $add(\mathcal{J})= \mathfrak{c}$ and any $A \notin \mathcal{J}$, then
$$(\mathcal{J} \cup \{A\})^* \neq \mathcal{J}^*.$$
\end{thm}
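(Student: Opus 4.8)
The plan is to show, given $A \notin \mathcal{J}$, that $A \in (\mathcal{J} \cup \{A\})^*$ fails to be in $\mathcal{J}^*$; equivalently, I want to produce some $F \in \mathcal{J}$ such that $A + F = 2^\omega$, while every set in $\mathcal{J}^*$ added to every member of $\mathcal{J}$ misses a point. The cleanest route is: using $cof(\mathcal{J}) \leq \mathfrak{c}$, fix a cofinal family $\{J_\xi : \xi < \mathfrak{c}\}$ in $\mathcal{J}$, and enumerate $2^\omega = \{x_\xi : \xi < \mathfrak{c}\}$. Build by transfinite recursion of length $\mathfrak{c}$ an increasing chain of sets $F_\xi \in \mathcal{J}$ (starting from $F_0 = \emptyset$), at stage $\xi$ adjoining a single point so that $x_\xi \in A + F_{\xi+1}$: concretely, since $A \notin \mathcal{J}$ and $\mathcal{J}$ is translation and reflection invariant, $-A + x_\xi \notin \mathcal{J}$, so in particular $-A + x_\xi \neq \emptyset$ — pick any $a \in A$ with $x_\xi - a \in -A + x_\xi$ wait, more simply pick $a \in A$ arbitrary and set $F_{\xi+1} = F_\xi \cup \{x_\xi - a\}$, which lies in $\mathcal{J}$ since $Count \subseteq \mathcal{J}$ and $add(\mathcal{J}) = \mathfrak{c}$ keeps the union of fewer than $\mathfrak{c}$ such singletons (together with the base sets) inside $\mathcal{J}$. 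At limit stages take unions, which stay in $\mathcal{J}$ by $add(\mathcal{J}) = \mathfrak{c}$. Setting $F = \bigcup_{\xi < \mathfrak{c}} F_\xi$, we get $F \in \mathcal{J}$ (again by $add(\mathcal{J}) = \mathfrak{c}$) and $A + F = 2^\omega$.

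From here the conclusion is quick. The set $A$ witnesses $A \notin (\mathcal{J} \cup \{A\})^*$ trivially, but that is not quite what we need; instead, observe that we have just shown $A \notin \mathcal{J}^*$ is \emph{false} in general — so I must be more careful about what object to feed into \Cref{ccc}. The right move is to apply \Cref{ccc} directly: it suffices to exhibit, for the given $A \notin \mathcal{J}$, that $(\mathcal{J} \cup \{A\})^* \neq \mathcal{J}^*$, and the inclusion $(\mathcal{J} \cup \{A\})^* \subseteq \mathcal{J}^*$ is automatic from monotonicity (third bullet of the facts theorem). So I only need one set $B \in \mathcal{J}^* \setminus (\mathcal{J} \cup \{A\})^*$, i.e. a $B$ with $B + J \neq 2^\omega$ for all $J \in \mathcal{J}$ but $B + A = 2^\omega$. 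Now run the symmetric construction: the recursion above, with the roles of $A$ and a to-be-built set $B$ swapped, produces $B \subseteq 2^\omega$ with $B + A = 2^\omega$; the point is to build $B$ simultaneously \emph{thin enough} to avoid covering $2^\omega$ with any single $J \in \mathcal{J}$. Using the cofinal family $\{J_\xi : \xi < \mathfrak{c}\}$, at stage $\xi$ I reserve a target point $z_\xi$ that $B + J_\xi$ must miss, and only add points to $B$ that do not put $z_\xi$ into $B + J_\xi$; since $J_\xi \in \mathcal{J}$ is ``small'' and at each stage only $<\mathfrak{c}$ constraints are active while $add(\mathcal{J}) = \mathfrak{c}$ guarantees the forbidden set $\bigcup_{\eta \le \xi}(z_\eta - J_\eta)$ stays in $\mathcal{J}$ hence is not all of $2^\omega$, there is room to pick the next point of $B$ needed to hit $x_\xi$. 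Then $B \in \mathcal{J}^*$ while $B \notin (\mathcal{J}\cup\{A\})^*$, giving $(\mathcal{J} \cup \{A\})^* \neq \mathcal{J}^*$ as required.

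The main obstacle is the bookkeeping in this last simultaneous recursion: one must interleave two families of requirements — the ``hitting'' requirements ($x_\xi \in B + A$, of which there are $\mathfrak{c}$) and the ``avoiding'' requirements ($z_\xi \notin B + J_\xi$ for a cofinal set of $J_\xi$'s) — and check at every stage that the set of points forbidden for $B$, namely the union over active avoiding-requirements of translates of the relevant $J_\eta$'s, is a member of $\mathcal{J}$ and in particular is a proper subset of $2^\omega$, so that a legal choice of the next point of $B$ exists. This is where all three hypotheses are used: $add(\mathcal{J}) = \mathfrak{c}$ to keep partial unions in $\mathcal{J}$ through stages of length $<\mathfrak{c}$; $cof(\mathcal{J}) \leq \mathfrak{c}$ to have only $\mathfrak{c}$-many avoiding requirements; properness of $\mathcal{J}$ (so $2^\omega \notin \mathcal{J}$) to guarantee the forbidden set never exhausts the space. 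Translation and reflection invariance are needed so that the arithmetic $z - J$, $-A + x$ etc. stays inside $\mathcal{J}$ and the set of ``good'' points hitting a given $x_\xi$ is nonempty. I would write the recursion with a fixed bijection $\mathfrak{c} \to \mathfrak{c} \times 2$ assigning to each ordinal either a hitting task or an avoiding task, and verify the invariant ``$|B \cap \text{stage } \xi| \leq |\xi| + \aleph_0$ and the forbidden set at stage $\xi$ lies in $\mathcal{J}$'' is preserved.
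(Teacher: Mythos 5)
Your first paragraph is a false start that contains a genuine error: $add(\mathcal{J}) = \mathfrak{c}$ only keeps unions of fewer than $\mathfrak{c}$ sets inside $\mathcal{J}$, so $F = \bigcup_{\xi<\mathfrak{c}} F_\xi$ need not be in $\mathcal{J}$; moreover, whether $A \in \mathcal{J}^*$ is irrelevant to the claim, and ``$A \notin (\mathcal{J}\cup\{A\})^*$ trivially'' is false (it would require $A + A = 2^\omega$). Once you pivot to producing $B \in \mathcal{J}^* \setminus (\mathcal{J}\cup\{A\})^*$, though, you land exactly on the paper's construction: a transfinite recursion of length $\mathfrak{c}$ interleaving hitting requirements ($z_\lambda \in x_\lambda + A$, so $B + A = 2^\omega$) with avoiding requirements (reserve a point $r_\lambda$ outside the sets $F_{\alpha} + x_{\beta}$ built so far, and thereafter forbid new points of $B$ from the translates $r_\lambda - F_\alpha$), using $add(\mathcal{J}) = \mathfrak{c}$ to keep the partial forbidden unions in $\mathcal{J}$, $cof(\mathcal{J}) \leq \mathfrak{c}$ to have only $\mathfrak{c}$-many avoiding tasks, and $A \notin \mathcal{J}$ together with translation/reflection invariance to guarantee a legal witness in $x_\lambda - A$. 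So the substance of your proposal is the paper's proof; I'd cut the first paragraph entirely and spell out the two invariants (the reserved points avoid all prior $F_\alpha + x_\beta$, and later $x$'s avoid all $r_\alpha - F_\beta$ with indices up to the current stage), since the conclusion that $B \in \mathcal{J}^*$ hinges precisely on threading both.
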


\begin{proof}
Fix $A\notin\mathcal{J}$.
Let $\mathcal{F} $ be a basis of $\mathcal{J}$ of size $\mathfrak{c}.$ 

Let $\{z_\alpha\}_{\alpha < \mathfrak{c}}=2^\omega$ be an enumeration of $2^\omega$ and let $\{F_\alpha\}_{\alpha < \mathfrak{c}}$ be an enumeration of all sets from $\mathcal{F}$. We  will build sequences  $\{x_\alpha\}_{\alpha < \mathfrak{c}}$ and $\{r_\alpha\}_{\alpha < \mathfrak{c}}$ satisfying for every $\lambda<\mathfrak c$ the following conditions:
\begin{enumerate}
    \item $r_\lambda \notin \bigcup_{\alpha_1,\alpha_2 < \lambda} (F_{\alpha_{1}} + x_{\alpha_{2}})$,
    \item $z_\lambda\in x_\lambda+A$,
    \item $x_\lambda\notin \bigcup_{\alpha_1,\alpha_2 \leqslant \lambda} (r_{\alpha_{1}} - F_{\alpha_{2}} )$.
    
\end{enumerate}
Start with two different $x_0$ and $r_0$ such that $z_0\in x_0+A$ and $x_0\notin r_0-F_0$. 

Let $\lambda < \mathfrak{c}$. Suppose that we already constructed $\{x_\alpha\}_{\alpha < \lambda}$ and $\{r_\alpha\}_{\alpha < \lambda}$. 
Since $\mathfrak{c}= add(\mathcal{J})\le cov(\mathcal{J})$ the set $\bigcup_{\alpha_1,\alpha_2 < \lambda} (F_{\alpha_{1}} + x_{\alpha_{2}}) \neq 2^{\omega}$. Hence  we can find $r_{\lambda}$ satisfying (1).

Let $B_\lambda = 2^{\omega} \backslash \bigcup_{\alpha_1,\alpha_2 \leqslant \lambda} (r_{\alpha_{1}} - F_{\alpha_{2}} )$. Since $\lambda <\mathfrak c = add(\mathcal{J})$ the set  $2^{\omega} \backslash B_{\lambda}$ belongs to $\mathcal{J}$. Consequently, $2^{\omega} \backslash (z_{\lambda} - B_{\lambda})= z_{\lambda}- (2^{\omega}\backslash B_{\lambda}) \in \mathcal{J}$. Since $A \notin \mathcal{J}$, then $A \nsubseteq 2^{\omega} \backslash (z_\lambda - B_\lambda)$. So, $A\cap (z_\lambda - B_\lambda) \neq \emptyset$. Hence, there are $a_\lambda \in A$ and $b_\lambda \in B_\lambda$ such that $z_\lambda - b_\lambda = a_\lambda$. Set $x_\lambda = b_\lambda$. It clarly satisfies (2) and (3).


Now, let us  define $X= \{x_\alpha \}_{\alpha < \mathfrak{c}}$. By (2), $A+X=2^{\omega}$. Hence, the set $X$ does not belong to $(\mathcal{J}\cup\{A\})^*$. 

To show that $X\in \mathcal{J}^{*}$ let us fix $J\in\mathcal {J}$. Notice that for some $\alpha<\mathfrak c$ $J\subseteq F_\alpha$. Take $\lambda>\alpha$. 
By (1), $r_\lambda\notin F_\alpha+\{x_\xi:\ \xi<\lambda\}$ and by (3), for $\xi\ge\lambda$, $x_\xi\notin r_\lambda-F_\alpha$. Hence, $r_\lambda\notin F_\alpha+\{x_\xi:\ \xi\ge\lambda\}$. It shows that $X\in \mathcal{J}^{*}$.

\end{proof}

Previous Theorem also holds for Polish groups of cardinality continuum.

\begin{rk} From \Cref{bbbbb} we get:

    \begin{itemize}
        \item $add(\mathcal{M})=\mathfrak{c}\Longrightarrow \mathcal{M}=\mathcal{M}^{**}$,
                \item $add(\mathcal{N})=\mathfrak{c}\Longrightarrow \mathcal{N}=\mathcal{N}^{**}$.
    \end{itemize}
\end{rk}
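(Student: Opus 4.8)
The plan is to read off both implications directly from \Cref{bbbbb} together with the Horbaczewska--Lindner criterion \Cref{ccc}. Combining these two results, it suffices to check that $\mathcal{M}$ and $\mathcal{N}$, viewed as ideals of subsets of the group $\langle 2^\omega, +\rangle$ (coordinatewise addition modulo $2$), are translation and reflection invariant proper $\sigma$-ideals with $\cof(\cdot)\le\mathfrak c$; once this is known, \Cref{bbbbb} gives $(\mathcal{J}\cup\{A\})^*\neq\mathcal{J}^*$ for every $A\notin\mathcal{J}$, which by \Cref{ccc} is equivalent to $\mathcal{J}=\mathcal{J}^{**}$, where $\mathcal{J}\in\{\mathcal{M},\mathcal{N}\}$.

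So the first step is to verify the structural hypotheses. That $\mathcal{M}$ and $\mathcal{N}$ are proper $\sigma$-ideals is classical. Translation invariance holds because for each $t\in 2^\omega$ the map $x\mapsto x+t$ is a homeomorphism of $2^\omega$ which preserves the Haar measure $\lambda$, hence carries meager sets to meager sets and null sets to null sets. Reflection invariance is automatic here: in $2^\omega$ every element satisfies $-x=x$, so $-X=X$ for every $X\subseteq 2^\omega$. Finally, $\cof(\mathcal{M})\le\mathfrak c$ and $\cof(\mathcal{N})\le\mathfrak c$ because there are exactly $\mathfrak c$ Borel subsets of $2^\omega$, every meager set is contained in a Borel (in fact $F_\sigma$) meager set, and every null set is contained in a Borel (in fact $G_\delta$) null set; thus the Borel members of each ideal form a cofinal family of size $\le\mathfrak c$.

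The second step is then purely formal: assuming $\add(\mathcal{M})=\mathfrak c$ (respectively $\add(\mathcal{N})=\mathfrak c$), all hypotheses of \Cref{bbbbb} are met with $\mathcal{J}=\mathcal{M}$ (respectively $\mathcal{J}=\mathcal{N}$), so for every $A\notin\mathcal{J}$ we get $(\mathcal{J}\cup\{A\})^*\neq\mathcal{J}^*$, and \Cref{ccc} yields $\mathcal{J}=\mathcal{J}^{**}$.

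The only point requiring any care — and hence the ``main obstacle'', though it is a mild one — is confirming that $\cof(\mathcal{M})\le\mathfrak c$ and $\cof(\mathcal{N})\le\mathfrak c$ hold in ZFC (so that they need not be added to the hypotheses), and noticing that reflection invariance is vacuous in the Cantor group; everything else is a direct citation of \Cref{bbbbb} and \Cref{ccc}.
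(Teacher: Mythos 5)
Your proposal is correct and follows exactly the route the paper intends: the remark is stated as an immediate consequence of \Cref{bbbbb} with no written proof, and you have simply filled in the routine verifications (that $\mathcal{M}$ and $\mathcal{N}$ are translation/reflection invariant proper $\sigma$-ideals of cofinality at most $\mathfrak c$, with reflection invariance being vacuous since $-x=x$ in $2^\omega$) before citing \Cref{bbbbb} and then \Cref{ccc}. Nothing is missing and no genuinely different idea is used.
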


The implication in \Cref{bbbbb} cannot be reversed, since Pawlikowski and Sabok showed in \cite{Pawlikowski2008-PAWTS} that $Count = Count^{**}$ and $\add(Count) = \omega_1$.

Now we will show some examples of families where we have $\mathcal{F}=\mathcal{F}^*$.

Let us start with the space of integers.

\begin{thm}
    There exists a set $\mathcal{F}\subseteq\mathcal{P}(\mathbb{Z}) $ such that $\mathcal F=\mathcal{F}^{*}$.
\end{thm}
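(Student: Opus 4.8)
The plan is to build $\mathcal{F}$ by a back-and-forth / transfinite construction so that $\mathcal{F} \subseteq \mathcal{F}^*$ (no set in $\mathcal{F}$ can tile $\mathbb{Z}$ with another member of $\mathcal{F}$, even after translation) while simultaneously killing every potential witness to $\mathcal{F}^* \setminus \mathcal{F} \neq \emptyset$. Concretely, using the characterization from \Cref{ccc}, it suffices to produce $\mathcal{F}$ with $\mathcal{F} = \mathcal{F}^{**}$ together with $\mathcal{F}^* = \mathcal{F}$; but more directly I would aim to construct $\mathcal{F}$ so that $\mathcal{F}^* = \mathcal{F}$ holds by design. Since $|\mathcal{P}(\mathbb{Z})| = \mathfrak{c}$, enumerate $\mathcal{P}(\mathbb{Z}) = \{A_\alpha : \alpha < \mathfrak{c}\}$ and build an increasing chain $\mathcal{F}_\alpha$ of subfamilies, at stage $\alpha$ deciding whether $A_\alpha$ goes into $\mathcal{F}$, in such a way that the two requirements — (a) $A + F \neq \mathbb{Z}$ for all $A, F \in \mathcal{F}$ (so $\mathcal{F} \subseteq \mathcal{F}^*$), and (b) for every $A \notin \mathcal{F}$ there is some $F \in \mathcal{F}$ with $A + F = \mathbb{Z}$ (so $\mathcal{F}^* \subseteq \mathcal{F}$) — are both met. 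The equivalence $A + F \neq \mathbb{Z} \Leftrightarrow \exists y\, (A+y)\cap F = \emptyset$ recorded in the excerpt will be the basic tool for controlling these sums.

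First I would isolate the algebra of tiling in $\mathbb{Z}$. Note that $A + F = \mathbb{Z}$ fails iff some translate of $F$ is disjoint from $A$, i.e. iff $A$ omits a bi-infinite "copy" of the complement structure of $F$. A clean sufficient condition for $\mathcal{F} \subseteq \mathcal{F}^*$ is to put into $\mathcal{F}$ only sets that are "thin" in a robust, translation-stable sense — for instance sets all contained in a fixed set $S \subseteq \mathbb{Z}$ whose complement contains arbitrarily long intervals in both directions, so that for any $A, F \subseteq S$ one can translate $F$ into a long gap of $\mathbb{Z} \setminus A$; this immediately gives $A + F \neq \mathbb{Z}$. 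The nontrivial half is requirement (b): for each $A$ that we decide to leave out of $\mathcal{F}$, we must have already (or will later) place into $\mathcal{F}$ some $F$ with $A + F = \mathbb{Z}$. The idea is that a set $A$ can be left out precisely when it is "fat enough" that some thin $F$ tiles with it; and if $A$ itself is thin (hence a candidate for membership) we instead put it in, at which point consistency with (a) must be checked against everything already in $\mathcal{F}$.

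The key steps, in order: (1) fix the skeleton $S$ and the family $\mathcal{T}$ of admissible thin sets, verifying $\mathcal{T}$ is closed under translation and that any two members fail to tile $\mathbb{Z}$; (2) set up the transfinite recursion over $\{A_\alpha\}$, maintaining $|\mathcal{F}_\alpha| < \mathfrak{c}$ and the invariant that (a) holds for $\mathcal{F}_\alpha$ and that every $A_\beta$, $\beta < \alpha$, left out so far has a tiling partner already in $\mathcal{F}_\alpha$; (3) at stage $\alpha$, if $A_\alpha$ is not thin, find a thin $F$ with $A_\alpha + F = \mathbb{Z}$ — this is a genuine combinatorial lemma, essentially saying a set missing no long interval can be tiled by a suitable sparse set — and add $F$ to $\mathcal{F}$ (checking it still doesn't tile with earlier members); (4) if $A_\alpha$ is thin, add it to $\mathcal{F}$ directly, as (a) is preserved by construction of $\mathcal{T}$; (5) take $\mathcal{F} = \bigcup_{\alpha < \mathfrak{c}} \mathcal{F}_\alpha$ and read off $\mathcal{F} = \mathcal{F}^*$.

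I expect the main obstacle to be step (3): engineering, for an arbitrary non-thin $A \subseteq \mathbb{Z}$, a thin partner $F$ with $A + F = \mathbb{Z}$ while keeping $F$ inside the fixed admissible class $\mathcal{T}$ — the tension is that $\mathcal{T}$ must be thin enough that no two of its members tile $\mathbb{Z}$, yet rich enough that every non-member of $\mathcal{F}$ admits a tiling partner in it. Balancing these two demands is the crux; it may force a more delicate choice of $S$ (e.g. a carefully spaced set so that translates of thin sets can be steered into gaps, while sumsets with dense sets still cover) or a bookkeeping argument interleaving the "in" and "out" decisions. Everything else is routine transfinite induction of length $\mathfrak{c}$ with $<\mathfrak{c}$-sized objects.
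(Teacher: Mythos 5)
Your strategy breaks at exactly the point you flag as the crux, and the break is not repairable along the lines you sketch. With $\mathcal{T}$ defined as the subsets of a fixed sparse $S\subseteq\mathbb{Z}$, the family is not closed under translation (a translate of a subset of $S$ generally leaves $S$), and requirement (b) fails outright: take any singleton $\{n\}$ with $n\notin S$. Then $\{n\}\notin\mathcal{T}$, yet $\{n\}+F$ is just a translate of $F$ and so never equals $\mathbb{Z}$ unless $F=\mathbb{Z}$; hence $\{n\}\in\mathcal{F}^*$ for any proper family $\mathcal{F}$ you could build, giving $\mathcal{F}^*\neq\mathcal{F}$ no matter how the recursion proceeds. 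The ``genuine combinatorial lemma'' you defer in step (3) --- every non-thin $A$ admits a thin tiling partner --- is simply false for this notion of thinness, because ``not contained in $S$'' in no way implies ``misses no long interval.'' There is a secondary problem in your verification of (a) as well: an infinite $F$ cannot be translated into a bounded gap of $\mathbb{Z}\setminus A$, so ``arbitrarily long gaps in $\mathbb{Z}\setminus S$'' does not by itself yield $A+F\neq\mathbb{Z}$ for $A,F\subseteq S$. Finally, observe that your recursion, as designed, ends with $\mathcal{F}=\mathcal{T}$ anyway (step (4) admits every thin set and step (3) only adds thin sets), so the transfinite bookkeeping contributes nothing beyond a direct verification that $\mathcal{T}=\mathcal{T}^*$ --- which, as above, fails.

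The paper avoids all of this with a one-line explicit construction and no recursion. Let $O$ and $E$ denote the odd and the even integers and take $\mathcal{F}=\mathcal{P}(O)\cup\mathcal{P}(E)$, the sets contained entirely in a single coset of $2\mathbb{Z}$. This $\mathcal{F}$ is translation-invariant --- a translate of a subset of $O$ is a subset of $O$ or of $E$ depending on the parity of the shift --- which is the closure property you want but cannot obtain from a single skeleton $S$. For $\mathcal{F}\subseteq\mathcal{F}^*$: if $A,F\in\mathcal{F}$ then $A+F\subseteq E$ or $A+F\subseteq O$ according to parity, so $A+F\neq\mathbb{Z}$. For $\mathcal{F}^*\subseteq\mathcal{F}$: if $x\notin\mathcal{F}$ then $x$ meets both $O$ and $E$, and already $x+O=\mathbb{Z}$ with $O\in\mathcal{F}$. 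The correct replacement for your ``thin'' is therefore ``contained in a single coset of a fixed finite-index subgroup,'' which is automatically translation-closed and makes both inclusions a two-line check.
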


\begin{proof}
    Take 
    $$\mathcal{F}=\{x: x\in \mathcal{P}({O})\vee x\in \mathcal{P}({E})\},$$
    where ${O}$ is the set of odd numbers and ${E}$ is the set of even numbers. 

    Take any $x,y\in \mathcal{F}$. If both $x$  and $y$ contain numbers of the same parity, then $x+y$ can only contain even numbers. If $x$ and $y$ are of different parity, then $x+y$ can contain only odd numbers.

    Take any $x\notin\mathcal{F}$, then $x$ contains numbers of different parity. Hence, $x+\mathcal{O}= \mathcal{O}\cup\mathcal{E}=\mathbb{Z}$. So $x\not\in \mathcal{F}^*$.

\end{proof}

The next example is built in the Cantor space.

\begin{thm}\label{f=f*cantor}
    There exists $\mathcal{F}\subseteq \mathcal{P}(2^{\omega})$ such that $\mathcal{F}=\mathcal{F}^{*}$.
\end{thm}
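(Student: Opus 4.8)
The plan is to imitate the construction used a moment ago for $\mathbb{Z}$, replacing the parity classes by the two cosets of a clopen index-two subgroup of $2^\omega$. Concretely, I would set $H=\{x\in 2^\omega:\ x(0)=0\}$ and $H'=2^\omega\setminus H=\{x\in 2^\omega:\ x(0)=1\}$, regarding $2^\omega$ as a group under coordinatewise addition mod $2$. Then $H$ is a subgroup of index two and $H'$ is its unique nontrivial coset, so that $H+H=H=H'+H'$ and $H+H'=H'$. Define
$$\mathcal{F}=\mathcal{P}(H)\cup\mathcal{P}(H').$$

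First I would verify $\mathcal{F}\subseteq\mathcal{F}^{*}$. Take $x\in\mathcal{F}$ and $F\in\mathcal{F}$. Each of $x$ and $F$ is a subset of $H$ or of $H'$, so $x+F$ is contained in one of $H+H$, $H+H'$, $H'+H'$, hence in $H$ or in $H'$. Since $H\neq 2^\omega$ and $H'\neq 2^\omega$, this gives $x+F\neq 2^\omega$, so $x\in\mathcal{F}^{*}$.

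For the reverse inclusion $\mathcal{F}^{*}\subseteq\mathcal{F}$, suppose $x\notin\mathcal{F}$. Then $x\not\subseteq H$ and $x\not\subseteq H'$, so there are $a\in x\cap H$ and $b\in x\cap H'$. Consequently $x+H'\supseteq(a+H')\cup(b+H')=H'\cup H=2^\omega$. Since $H'\in\mathcal{P}(H')\subseteq\mathcal{F}$, this shows $x\notin\mathcal{F}^{*}$, and we conclude $\mathcal{F}=\mathcal{F}^{*}$.

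The whole argument is a routine transcription of the integer case, so there is no genuine obstacle; the only points needing a line of care are that $H$ is really a subgroup (so that sums of subsets of the two cosets never leave a single coset) and that it is proper of index exactly two (so each coset is a proper subset while the two cosets cover $2^\omega$).
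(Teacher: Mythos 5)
Your proof is correct, but it takes a genuinely different route from the paper's. You transplant the parity construction from $\mathbb{Z}$ verbatim: take the clopen index-two subgroup $H=\{x:x(0)=0\}$ of $2^\omega$ and its coset $H'$, and set $\mathcal{F}=\mathcal{P}(H)\cup\mathcal{P}(H')$; the coset arithmetic ($H+H=H'+H'=H$, $H+H'=H'$) gives $\mathcal{F}\subseteq\mathcal{F}^*$, and picking one point from each coset in any $x\notin\mathcal{F}$ gives $x+H'=2^\omega$, so $\mathcal{F}^*\subseteq\mathcal{F}$. The paper instead fixes an ultrafilter $\mathcal{U}$ on $\omega$ and takes $\mathcal{F}=\{A\subseteq 2^\omega:\forall f,g\in A\;\{n:f(n)=g(n)\}\in\mathcal{U}\}$, proving both inclusions via $\mathcal{U}$-largeness. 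Your approach is strictly more elementary (no ultrafilter, no appeal to finite intersections of $\mathcal{U}$-sets) and makes transparent that the result is really about a single proper subgroup of finite index. What the paper's ultrafilter construction buys is a family not built from one fixed coset decomposition, and a template that the authors then reuse word-for-word for $\mathbb{Z}^\omega$ with the condition $\{n:2\mid f(n)-g(n)\}\in\mathcal{U}$, emphasizing a uniform method across the three ambient groups; your argument would also adapt to $\mathbb{Z}^\omega$ by taking $H=\{x:x(0)\text{ even}\}$, so there is no loss of generality, only a difference in emphasis.
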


\begin{proof}
    Let  $\mathcal{U}$ be an ultrafilter on $\omega$. Take a set

$$\mathcal{F}=\big\{A\subseteq2^{\omega}: \forall f,g\in A\; \{n:f(n)=g(n)\}\in\mathcal{U}\big\}.$$

To show that $\mathcal{F}\subseteq \mathcal{F}^*$ take any $A,B\in\mathcal{F}$. Notice that for every $f, f'\in A$ and for every $g, g'\in B$ 
$$
\{n:f(n)=f'(n)\}\cap\{n: g(n)=g'(n)\}\in\mathcal{U}.
$$
Hence 
$$
\forall f, f'\in A\;  \forall g, g'\in B \quad \{n:f(n)+g(n)=f'(n)+g'(n)\}\in\mathcal{U}
$$
and, therefore, $A+B\neq \mathcal{P}(2^\omega)$.

To show that $\mathcal{F}^*\subseteq \mathcal{F}$ take $B\notin \mathcal{F}$. Then there exists $f,g \in B$ such that 
$$\{n:f(n)\neq g(n)\}\in\mathcal{U}.$$ 
Define $A=\big\{x\in 2^\omega:\ \{n:\ x(n)=0\}\in\mathcal{U}\big\}$. Notice that 
$$
A+B\supseteq A+\{f,g\}\supseteq \big\{x:\ \{n:\ x(n)=0\}\in\mathcal{U}\big\}\cup \big\{x:\ \{n:\ x(n)=1\}\in\mathcal{U}\big\}= 2^\omega.
$$
Hence, $B\notin \mathcal{F}^*$.


\end{proof}

\begin{thm}
    There exists $\mathcal{F}\subseteq \mathcal{P}(\mathbb{Z}^{\omega})$ such that $\mathcal F= \mathcal F^{*}$.
\end{thm}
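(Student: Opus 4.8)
The plan is to mimic the construction used for the Cantor space in Theorem \ref{f=f*cantor}, replacing the "eventual-agreement modulo an ultrafilter" condition with a condition that also controls the growth of coordinates, since $\mathbb{Z}^\omega$ is unbounded in each coordinate. Concretely, I would again fix an ultrafilter $\mathcal{U}$ on $\omega$ and set
$$\mathcal{F}=\big\{A\subseteq\mathbb{Z}^\omega:\ \forall f,g\in A\ \{n: f(n)=g(n)\}\in\mathcal{U}\big\}.$$
The inclusion $\mathcal{F}\subseteq\mathcal{F}^*$ goes through verbatim as in Theorem \ref{f=f*cantor}: for $A,B\in\mathcal{F}$ and $f,f'\in A$, $g,g'\in B$, the set $\{n: f(n)+g(n)=f'(n)+g'(n)\}$ contains the intersection $\{n:f(n)=f'(n)\}\cap\{n:g(n)=g'(n)\}\in\mathcal{U}$, so picking any fixed $f_0\in A$, $g_0\in B$, every element of $A+B$ agrees with $f_0+g_0$ on a $\mathcal{U}$-large set; hence $A+B\neq\mathbb{Z}^\omega$ (e.g. the sequence that differs from $f_0+g_0$ everywhere is omitted).

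For the reverse inclusion $\mathcal{F}^*\subseteq\mathcal{F}$, take $B\notin\mathcal{F}$, so there are $f,g\in B$ with $S:=\{n: f(n)\neq g(n)\}\in\mathcal{U}$. I need a single $A\in\mathcal{P}(\mathbb{Z}^\omega)$ with $A+B=\mathbb{Z}^\omega$, witnessed already by $A+\{f,g\}$. Given a target $x\in\mathbb{Z}^\omega$, on each coordinate $n\in S$ I want to choose either $x(n)-f(n)$ or $x(n)-g(n)$; off $S$ I am forced to use $x(n)-f(n)=x(n)-g(n)$. So the natural candidate is
$$A=\big\{y\in\mathbb{Z}^\omega:\ \forall n\notin S\ y(n)=0\ \wedge\ \forall n\in S\ (y(n)+f(n)=y(n')+g(n')\text{-type consistency})\big\},$$
but the cleanest formulation is simply: let $A$ be the set of all $y$ such that for every $x\in\mathbb{Z}^\omega$ there is a choice — actually it is simpler to just take $A=\{x-h: x\in\mathbb{Z}^\omega,\ h\in\{f,g\}^\omega,\ h(n)=f(n)\text{ for }n\notin S\}$, i.e. $A$ is the set of all sequences obtained from an arbitrary sequence by subtracting a "mixed $f/g$" sequence. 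Then by construction $A+\{f,g\}\supseteq A+f\ni x$ whenever we picked $h\equiv f$... this needs care: the point is that for any $x$, choosing $h$ with $h(n)=f(n)$ off $S$ and $h(n)\in\{f(n),g(n)\}$ on $S$ arbitrarily, we get $x-h\in A$ and $(x-h)+h=x$, but $h$ is one of $f,g$ only if $h\equiv f$ or $h\equiv g$; since on $S$ the value $g(n)$ really differs from $f(n)$, to hit $x$ via $A+\{f,g\}$ we should instead fix $h=f$ and note $x-f\in A$ trivially, giving $A+f=\mathbb{Z}^\omega$ already. The subtlety to resolve is therefore making $A$ large enough to cover $\mathbb{Z}^\omega$ via $A+\{f,g\}$ while keeping the argument honest; the slick route is to observe $A:=\mathbb{Z}^\omega-f=\mathbb{Z}^\omega$, which is too big — so instead one constrains $A$ on $S$ to record a "parity-like" bit as in the integer example, e.g. on coordinates in $S$ force $y(n)-x(n)\in\{-f(n),-g(n)\}$ to be determined by membership of $n$ in a second fixed $\mathcal{U}$-set, exactly paralleling the odd/even split.

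The main obstacle I anticipate is this last point: in $2^\omega$ the two forced values $x(n)-f(n)$ and $x(n)-g(n)$ on a disagreement coordinate are precisely $\{0,1\}$ up to translation, which is why the auxiliary set $A=\{x:\{n:x(n)=0\}\in\mathcal{U}\}$ works so cleanly; over $\mathbb{Z}$ the "difference" $g(n)-f(n)$ varies with $n$, so I must either (a) absorb it by defining $A$ relative to the fixed pair $f,g$ — legitimate, since $\mathcal{F}^*\subseteq\mathcal{F}$ only requires \emph{some} $A$ witnessing $B\notin\mathcal{F}^*$, and that $A$ may depend on $B$ — letting
$$A=\big\{y\in\mathbb{Z}^\omega:\ \{n:\ y(n)=-f(n)\}\in\mathcal{U}\ \text{or}\ \{n:\ y(n)=-g(n)\}\in\mathcal{U}\big\}\cup(\text{enough translates}),$$
or (b) more simply note that $A+\{f,g\}=\mathbb{Z}^\omega$ is achieved by $A=(\mathbb{Z}^\omega-f)$, which forces $A\notin\mathcal{F}$, so that route fails and one genuinely needs the ultrafilter-constrained $A$. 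I would therefore spend the bulk of the proof pinning down the right $A$: take $A=\{y:\ \{n:y(n)+f(n)=0\}\in\mathcal{U}\}\cup\{y:\ \{n:y(n)+g(n)=0\}\in\mathcal{U}\}$ and check (i) that $A+\{f,g\}$ covers $\mathbb{Z}^\omega$ — given $x$, on a $\mathcal{U}$-large set set $y(n)=-f(n)$ and elsewhere $y(n)=x(n)-g(n)$, getting $y\in A$ with $y+g$ agreeing with $x$ off a small set, then repair on the small set using the other piece — and (ii) that this $A$ indeed realizes every $x$; the combinatorial bookkeeping of splitting $\omega$ into a $\mathcal{U}$-piece and its complement and alternating between the $f$-branch and $g$-branch is exactly where the care is needed, and is the one place the proof is not a literal transcription of Theorem \ref{f=f*cantor}.
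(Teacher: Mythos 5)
Your choice of $\mathcal{F}$ is not the one the paper uses, and it genuinely fails: with
\[
\mathcal{F}=\big\{A\subseteq\mathbb{Z}^\omega:\ \forall f,g\in A\ \{n:f(n)=g(n)\}\in\mathcal{U}\big\}
\]
the inclusion $\mathcal{F}^*\subseteq\mathcal{F}$ is false, and no choice of auxiliary $A$ can repair the argument. Concretely, let $\mathbf{0},\mathbf{1}\in\mathbb{Z}^\omega$ denote the constant sequences and take $B=\{\mathbf{0},\mathbf{1}\}$. Then $B\notin\mathcal{F}$ (its two elements disagree everywhere), yet $B\in\mathcal{F}^*$: any $F\in\mathcal{F}$ lies inside a single equivalence class of the relation ``$\{n:y_1(n)=y_2(n)\}\in\mathcal{U}$'', so $F+B=F\cup(F+\mathbf{1})$ is contained in the union of two such classes, and there are infinitely many classes (the constant sequences $\mathbf{0},\mathbf{1},\mathbf{2},\dots$ are pairwise inequivalent), hence $F+B\neq\mathbb{Z}^\omega$ for every $F\in\mathcal{F}$. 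The difficulty you sense at the end of your write-up is real, but it is not a bookkeeping issue with $A$; it is that your relation has too many classes, so a two-element $B$ can never translate a single class onto all of $\mathbb{Z}^\omega$.

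The paper's fix is to change $\mathcal{F}$, not $A$: it sets
\[
\mathcal{F}=\big\{A\subseteq\mathbb{Z}^{\omega}:\ \forall f,g\in A\ \{n: 2\mid f(n)-g(n)\}\in\mathcal{U}\big\},
\]
asking for agreement modulo $2$ on a $\mathcal{U}$-large set. Since $\mathcal{U}$ is an ultrafilter, this relation has exactly two equivalence classes, $\{y:\{n:y(n)\text{ even}\}\in\mathcal{U}\}$ and $\{y:\{n:y(n)\text{ odd}\}\in\mathcal{U}\}$, which restores the two-valued combinatorics of $2^\omega$. If $B\notin\mathcal{F}$, pick $f,g\in B$ with $\{n:f(n)-g(n)\text{ odd}\}\in\mathcal{U}$ and take $A=\{y:\{n:y(n)\text{ even}\}\in\mathcal{U}\}\in\mathcal{F}$; for any $x$, if both $x-f\notin A$ and $x-g\notin A$ then $\{n:x(n)-f(n)\text{ odd}\}\in\mathcal{U}$ and $\{n:x(n)-g(n)\text{ odd}\}\in\mathcal{U}$, and on the intersection $f(n)-g(n)$ would be even, contradicting the choice of $f,g$; hence $A+\{f,g\}=\mathbb{Z}^\omega$. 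Your verification of $\mathcal{F}\subseteq\mathcal{F}^*$ also transfers to this $\mathcal{F}$ essentially unchanged, so both inclusions go through exactly as in the Cantor-space proof.
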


\begin{proof}
    Let $\mathcal{U}$ be an ultrafilter on $\omega$. Set

$$\mathcal{F}=\big\{A\subseteq\mathbb{Z}^{\omega}: \forall f,g\in A\; \{n:2|f(n)-g(n)\}\in\mathcal{U}\big\}.$$
The rest of the proof is similar to the proof of Theorem \ref{f=f*cantor}.

\end{proof}

In \cite{Kysiak}, M. Kysiak introduced the notion of very meager sets, motivated by the observation that strongly meager sets are not necessarily closed under countable, or even finite, unions. The class of very meager sets, however, forms a $\sigma$-ideal. The following definition provides a natural generalization of this concept.

\begin{df}
For a family $\mathcal{F}\subseteq \mathcal{P}(X)$ let:

$$\hat{\mathcal{F}}=\{A\subseteq X:\; \forall F\in \mathcal{F}\; \exists T\in [X]^{\omega}\; A\subseteq F^c+T\}.$$

\end{df}

Of course we have 
$\mathcal{F}^{*}\subseteq \hat{\mathcal{F}}$.

It turns out that for some families 
the result of this operation is, unfortunately, the whole space. It is the case for $\sigma$-compact subsets of the Baire space.

\begin{thm}
    $\hat{K}_{\sigma}=\mathcal{P}(\mathbb{Z}^{\omega})$.
\end{thm}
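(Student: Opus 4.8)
\emph{Proof proposal.} The plan is to prove the formally stronger statement that for every $\sigma$-compact $F\subseteq\mathbb{Z}^{\omega}$ there is a countable set $T$ with $F^{c}+T=\mathbb{Z}^{\omega}$; once this is available, any $A\subseteq\mathbb{Z}^{\omega}$ satisfies $A\subseteq\mathbb{Z}^{\omega}=F^{c}+T$ for every $F\in K_{\sigma}$, so $A\in\hat{K}_{\sigma}$, and hence $\hat{K}_{\sigma}=\mathcal{P}(\mathbb{Z}^{\omega})$. The first reduction I would make is that it suffices to find a \emph{single} $z\in\mathbb{Z}^{\omega}$ with $F\cap(F+z)=\emptyset$: taking $T=\{0,z\}\cup S$ for an arbitrary countably infinite $S\subseteq\mathbb{Z}^{\omega}$, one gets $F^{c}+T\supseteq F^{c}\cup(F^{c}+z)=F^{c}\cup(F+z)^{c}=\bigl(F\cap(F+z)\bigr)^{c}=\mathbb{Z}^{\omega}$. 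Note also that $F\cap(F+z)=\emptyset$ is equivalent to $z\notin F-F$, so the real task is to produce a point $z$ outside $F-F$.

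To do this, write $F=\bigcup_{n}K_{n}$ with each $K_{n}$ compact and, without loss of generality, $K_{n}\subseteq K_{n+1}$. Since subtraction on the topological group $\mathbb{Z}^{\omega}$ is continuous and each $K_{n}\times K_{n}$ is compact, each $K_{n}-K_{n}$ is compact, so $F-F=\bigcup_{n}(K_{n}-K_{n})$ is again $\sigma$-compact. But $\mathbb{Z}^{\omega}$ is homeomorphic to the Baire space $\omega^{\omega}$, which is not $\sigma$-compact; hence $F-F\neq\mathbb{Z}^{\omega}$ and the required $z$ exists. If an explicit witness is wanted, one can fix compact boxes $K_{n}\subseteq\prod_{m}[-a^{n}_{m},a^{n}_{m}]$ with $a^{n}_{m}$ nondecreasing in $n$, observe $K_{n}-K_{n}\subseteq\prod_{m}[-2a^{n}_{m},2a^{n}_{m}]$, and take $z(m)=2a^{m}_{m}+1$, which escapes every such box and hence lies outside $F-F$.

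The step I expect to need the most attention is resisting the wrong first instinct. It is natural to try to ``push $F$ off to infinity'' by translations supported on finitely many coordinates and to use genuinely infinitely many of them; this stalls, because a $\sigma$-compact set need not be bounded in any single coordinate and $\bigcup_{n}(\text{compact box})$ is stable under finitely supported translations. The observation that rescues the argument is that two translates already suffice: one drawn from the complement of the $\sigma$-compact set $F-F$ does everything, and such a point exists for the soft reason that $\mathbb{Z}^{\omega}$ itself fails to be $\sigma$-compact. The only facts to verify carefully are therefore the two standard ones used above—that $F-F$ is $\sigma$-compact (continuity of subtraction plus compactness of products, and reducing the double union to a single one using $K_{n}\subseteq K_{n+1}$) and that $\mathbb{Z}^{\omega}$ is not $\sigma$-compact.
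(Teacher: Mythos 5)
Your proof is correct. It is essentially the same two-translate argument as the paper's, differing only in packaging: the paper first covers $F$ by a set of the form $\{x:\forall^{\infty}n\ |x(n)|<f(n)\}$ (using the standard characterization of $K_\sigma$ subsets of $\mathbb{Z}^\omega$ via a dominating function) and then writes down the witness $b=2f+1$ explicitly, whereas you isolate the algebraic reduction ``$F\cap(F+z)=\emptyset\Leftrightarrow z\notin F-F$'' and appeal to the fact that $F-F$ is $\sigma$-compact while $\mathbb{Z}^\omega$ is not. Your version is slightly more conceptual and would transfer verbatim to any non-$\sigma$-compact abelian Polish group, which is a small gain in generality; the paper's version trades that for an immediate explicit witness (and your optional ``explicit witness'' paragraph is in effect the paper's construction of $b$). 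Both arguments also implicitly rely on the same diagonalization over coordinate bounds, so the difference is one of presentation rather than substance.
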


\begin{proof}
    Take any $F\in K_\sigma$. Then there exists $f\in\omega^\omega$ such that $$F\subseteq\{x\in \mathbb{Z^{\omega}: \forall^{\infty}}n \; |x(n)|< f(n)\}$$ and the latter set is still $K_\sigma$. 
    Set 
    $T=\{a,b\}$ where $a=\{(n,0)\; n\in\omega\}$ and $b=\{(n,2f(n)+1):\; n\in\omega\}$. Then $F^c+\{a\}=F^c$ and $F^c+\{b\}\supseteq F$. Hence $F^c+T=\mathbb{Z}^\omega$.
    
\end{proof}
\begin{df}
    
We say that two ideals $\mathcal{I}$ and $\mathcal{J}$ on $2^\omega$ are orthogonal $\mathcal{I}\perp\mathcal{J}$ if there exist $I\in \mathcal{I}$ and $J\in \mathcal{J}$ such that $I\cup J=2^\omega$.
\end{df}

The most well-known examples of orthogonal ideals are the ideals of meager sets and null sets.

The next theorem tells us when $\hat{\mathcal{J}}$ is not the whole space.

\begin{thm}
    $\hat{\mathcal{J}}\neq \mathcal{P}(2^\omega) \Longleftrightarrow \text{ there exists a proper $\sigma$-ideal }  \mathcal{I}$ such that ${\mathcal{I}\perp \mathcal{J}}$.
\end{thm}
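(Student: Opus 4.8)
For the direction ($\Leftarrow$), suppose $\mathcal{I}$ is a proper $\sigma$-ideal with $\mathcal{I}\perp\mathcal{J}$, witnessed by $I\in\mathcal{I}$ and $J\in\mathcal{J}$ with $I\cup J=2^\omega$. I would like to find a set $A\notin\hat{\mathcal{J}}$. The natural candidate is $A=2^\omega$ itself; so I must show $2^\omega\notin\hat{\mathcal{J}}$, i.e. there is some $F\in\mathcal{J}$ such that for no countable $T$ do we have $2^\omega\subseteq F^c+T$. Take $F=J$. Then $F^c\subseteq I\in\mathcal{I}$, so $F^c$ belongs to the proper translation-invariant... wait — $\mathcal{I}$ need not be translation invariant. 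So instead I would argue: if $2^\omega=J^c+T$ for some countable $T=\{t_n:n\in\omega\}$, then $2^\omega=\bigcup_n(J^c+t_n)$, a countable union of translates of $J^c$. Each $J^c+t_n\subseteq I+t_n$. Now I need these translates to stay in a proper $\sigma$-ideal. The fix is to enlarge $\mathcal{I}$ at the outset: replace $\mathcal{I}$ by the $\sigma$-ideal generated by all translates of $I$ — but that may fail to be proper. So the cleaner route is to first observe that \Cref{eeee} gives a largest proper translation-invariant $\sigma$-ideal $Count^*$, and the hypothesis $\mathcal{I}\perp\mathcal{J}$ should be massaged into saying $J\notin Count^*$... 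Actually the right statement: $\mathcal{I}\perp\mathcal{J}$ with $\mathcal{I}$ a proper $\sigma$-ideal means $J^c$ lies in a proper $\sigma$-ideal, hence $J^c$ cannot be "large"; but the relevant largeness for $\hat{\mathcal{J}}$ is exactly covering $2^\omega$ by countably many translates, which is membership of $J^c$ in $(Count^*)$... I will argue: $J^c\in\mathcal{I}$, and if countably many translates of $J^c$ covered $2^\omega$ then $J$ would be "strongly null-like" — more precisely, I claim $\{A:\ \exists\,\text{countable }T,\ A\subseteq J^c+T\}$... no. Let me restart the bookkeeping: the honest claim is that $F^c+T\ne 2^\omega$ for all countable $T$ \emph{iff} $F\notin$ (the ideal dual to the "translate-cover" notion). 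I would phrase it via: $2^\omega\ne F^c+T$ $\iff$ $\exists y\ (F+y)\cap\bigcap(\dots)$ — in any case, $F^c+T\ne 2^\omega$ exactly when $\bigcap_{t\in T}(F+t)\ne\emptyset$. So $2^\omega\notin\hat{\mathcal{J}}$ iff there is $F\in\mathcal{J}$ with $\bigcap_{t\in T}(F+t)\ne\emptyset$ for every countable $T\subseteq 2^\omega$, i.e. $F$ has the property that no countable family of its translates has empty intersection. Equivalently $F^c$ does not contain a union of countably many translates of... hmm, $\bigcap_{t}(F+t)=\bigl(\bigcup_t(F^c+t)\bigr)^c$, so the condition is: $\bigcup_{t\in T}(F^c+t)\ne 2^\omega$ for all countable $T$, i.e. $F^c$ is a "perfectly meager-sized" set in the sense of not tiling $2^\omega$ by countably many translates. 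A set $E$ with $\bigcup_{t\in T}(E+t)\ne 2^\omega$ for all countable $T$ is precisely a set that, together with the $\sigma$-ideal $\langle E+t:t\in 2^\omega\rangle$ it generates (translation-invariant), stays proper — so $E\in Count^*$ by \Cref{eeee}. Thus:
\[
2^\omega\notin\hat{\mathcal{J}}\iff \exists F\in\mathcal{J}\ \ F^c\in Count^*.
\]
Now the ($\Leftarrow$) direction follows: $\mathcal{I}\perp\mathcal{J}$ gives $I\in\mathcal{I}$, $J\in\mathcal{J}$, $I\cup J=2^\omega$, so $J^c\subseteq I$; since $\mathcal{I}$ is a proper $\sigma$-ideal, the translation-invariant $\sigma$-ideal generated by $\{I+t:t\in 2^\omega\}$ need not be proper, but I can instead use that $J^c$ itself, being coverable-avoiding... — no, this is exactly where care is needed. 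The clean argument: suppose toward contradiction $\bigcup_{t\in T}(J^c+t)=2^\omega$ for some countable $T$; then picking any $x\in 2^\omega$ and noting $x\in I\cup J$, translate-covering forces $2^\omega=\bigcup_t(J^c+t)$, a countable union, and each $J^c+t$ is a translate of a subset of $I$. This says $2^\omega$ is covered by countably many translates of $I$. If additionally $\mathcal{I}$ were translation invariant we'd get $2^\omega\in\mathcal{I}$, contradiction. To remove the translation-invariance assumption I would note we may \emph{assume without loss of generality} that $\mathcal{I}$ is the largest proper translation-invariant $\sigma$-ideal $Count^*$ itself — because if $\mathcal{I}\perp\mathcal{J}$ for \emph{some} proper $\sigma$-ideal then $J^c$ lies in a proper $\sigma$-ideal, but we need $J^c$ in a proper \emph{translation-invariant} one; this is not automatic and is the crux. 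I will handle it by observing that the statement of the theorem should be (and I believe is intended) with $\mathcal{I}$ ranging over translation-invariant $\sigma$-ideals, or equivalently by \Cref{eeee} it suffices that $J^c\in Count^*$; under that reading both directions are immediate from the displayed equivalence.

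For ($\Rightarrow$): assume $\hat{\mathcal{J}}\ne\mathcal{P}(2^\omega)$, so some $A\notin\hat{\mathcal{J}}$; there is $F\in\mathcal{J}$ with $A\not\subseteq F^c+T$ for every countable $T$. In particular (taking $A$ minus nothing) $F^c+T\ne 2^\omega$ for all countable $T$, hence by the analysis above $F^c\in Count^*$. Then $\mathcal{I}:=Count^*$ restricted to a genuine $\sigma$-ideal — but $Count^*$ is not an ideal (as remarked in the excerpt!). So instead let $\mathcal{I}$ be the (translation-invariant, proper) $\sigma$-ideal generated by $\{F^c+t:t\in 2^\omega\}$; properness is exactly $F^c\in Count^*$ via \Cref{eeee}. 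Then $F^c\in\mathcal{I}$ and $F\in\mathcal{J}$ with $F^c\cup F=2^\omega$, so $\mathcal{I}\perp\mathcal{J}$, as desired.

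\textbf{The main obstacle} I anticipate is the gap between "lies in a proper $\sigma$-ideal" and "lies in a proper \emph{translation-invariant} $\sigma$-ideal": the reverse direction produces a translation-invariant witness for free, but the forward direction (from an arbitrary proper $\sigma$-ideal $\mathcal{I}$) needs the extra step of passing to $Count^*$ and invoking \Cref{eeee}, and one must be careful that $Count^*$ itself is not an ideal so the actual witness $\mathcal{I}$ must be the translation-invariant $\sigma$-ideal \emph{generated} by the translates of $F^c$. The other small point to get right is the translation between "$F^c+T\ne 2^\omega$ for all countable $T$" and "$F^c$ generates a proper translation-invariant $\sigma$-ideal", which is just the countable-union closure plus the equivalence $A+F\ne X\iff\exists y\,(A+y)\cap F=\emptyset$ recorded after the definition of $^*$.
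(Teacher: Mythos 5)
Your argument is essentially the paper's. Both directions turn on the translation-invariant $\sigma$-ideal generated by the translates of $J^c$: the paper's $(\Rightarrow)$ is written as a contrapositive and produces exactly that $\sigma$-ideal as the witness (improper exactly when $J^c+T=2^\omega$ for some countable $T$), while its $(\Leftarrow)$ makes the same observation you make, that $J^c$ lying in a proper translation-invariant $\sigma$-ideal forces $J^c+T\neq 2^\omega$ for every countable $T$. The equivalence you isolate — $2^\omega\notin\hat{\mathcal{J}}$ iff some $F\in\mathcal{J}$ has $F^c\in Count^*$ — is the content of the paper's two halves phrased through \Cref{eeee}.

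The obstacle you flag is real and worth making explicit. The paper's $(\Leftarrow)$ step ``$J^c+T=I+T\in\mathcal{I}$'' silently uses translation invariance of $\mathcal{I}$; for a proper $\sigma$-ideal that is not translation invariant the conclusion can fail. For instance, take $x_0\in 2^\omega$, $\mathcal{I}=\{A\subseteq 2^\omega: x_0\notin A\}$ (a proper $\sigma$-ideal) and $\mathcal{J}$ the ideal generated by $\{x_0\}$: then $\mathcal{I}\perp\mathcal{J}$ via $I=2^\omega\setminus\{x_0\}$ and $J=\{x_0\}$, yet $J^c+\{0,t\}=2^\omega$ for any $t\neq 0$, so $\hat{\mathcal{J}}=\mathcal{P}(2^\omega)$. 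So the right reading of the theorem is with ``translation-invariant'' inserted before ``$\sigma$-ideal'' — consistent with the $(\Rightarrow)$ direction always producing a translation-invariant witness — and your detour through $Count^*$ and \Cref{eeee} is exactly the clean way to record that.
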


\begin{proof}
    $(\Longleftarrow)$ Let  $J\in \mathcal{J}$ and $I\in J$  be such that  $I\cup J=2^\omega$.  Then for any countable $T$ we have $J^c+T=I+T\in \mathcal{I}$. Hence, $J^c+T\neq2^{\omega}$.

    $(\Longrightarrow)$ Assume there doesn't exists ideal $\mathcal{I}$ such that $\mathcal{I}\perp \mathcal{J}$.
    
    Take $J\in \mathcal{J}$. Let $\mathcal{I}$ be
    the $\sigma$-ideal generated by translations of $J^c$, i.e. 
    $$\mathcal{I}=\{I\subseteq 2^\omega:\ \exists T\in[2^\omega]^\omega\; I\subseteq J^c+T\}.$$
    Clearly $\mathcal{I}\perp \mathcal{J}$. So $\mathcal{I}$ is not proper.
    Hence, there is a countable set $T$ such that $J^c+T=2^\omega$.
    It implies that $2^\omega\in \hat{\mathcal{J}}$.
    
    
\end{proof}

\section{Strong measure zero and strongly meager sets }

We begin with the classical definition of strong measure zero sets on the real line, originally introduced by Borel.

\begin{df} 
    A set $A \subseteq \mathbb{R}$ has strong zero measure when for every sequence $(\varepsilon_n)$ of positive reals there exists a sequence $(I_n)$ of intervals such that $|I_n|\leqslant \varepsilon_n$ and $A$ is contained in the union of $I_n$.
\end{df}

In \cite{galvin1973strong} Galvin, Mycielski and Solovay showed the relation between strong measure zero sets and meager sets.

\begin{thm}(Galvin-Mycielski-Solovay)
        A set $X\subseteq \mathbb{R}$ has strong measure zero if and only if for every meager set $H$ we have $X+H \neq \mathbb{R}$.
\end{thm}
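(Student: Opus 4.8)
The Galvin–Mycielski–Solovay theorem is a classical and nontrivial result, so I would not try to reprove it from scratch; instead I would indicate the structure of the standard argument and flag where the real work lies.

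**Plan.** Both implications are proved directly from the definitions. For the easy direction—that if $X$ has strong measure zero then $X+H\neq\mathbb R$ for every meager $H$—I would first reduce to the case where $H$ is a countable union of closed nowhere dense sets $F_n$. The key observation is that a closed nowhere dense set $F$, restricted to any interval $I$, misses a subinterval; iterating this, for any sequence of gauges one can choose, for each $n$, a point $t_n$ and a radius so that translating $X$ by a suitable element keeps it out of $F_n$. More precisely, I would use the reformulation from the excerpt, $X+H\neq\mathbb R \iff \exists y\;(X+y)\cap H=\emptyset$, and build $y$ by a fusion/diagonal argument: enumerate the $F_n$, and use strong measure zero of $X$ (with $\varepsilon_n$ shrinking fast) together with the nowhere density of $F_n$ to cover $X$ by tiny intervals each of which can be pushed off $F_n$ simultaneously, the compatibility of the finitely many constraints at each stage being arranged because the intervals are short. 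This is the heart of the "$\Rightarrow$" direction.

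For the converse—if $X+H\neq\mathbb R$ for every meager $H$ then $X$ has strong measure zero—I would argue by contraposition. Suppose $X$ does not have strong measure zero, witnessed by a sequence $(\varepsilon_n)$: no sequence of intervals $I_n$ with $|I_n|\le\varepsilon_n$ covers $X$. The goal is to manufacture a single meager set $H$ with $X+H=\mathbb R$. The standard construction builds $H$ as a countable union of closed nowhere dense sets, each being a finite union of short intervals arranged periodically (so that their complements are dense and the union is meager), chosen so that $-H$ shifted along $X$ fills $\mathbb R$; equivalently, for every $t\in\mathbb R$ one finds $x\in X$ with $t-x\in H$. One encodes the failure of strong measure zero into the combinatorics of which intervals appear in each level $F_n$, using that for the given gauge $X$ cannot be trapped, to guarantee the covering property $X+H=\mathbb R$ while keeping each $F_n$ nowhere dense.

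**Main obstacle.** The genuine difficulty in both directions is the bookkeeping that matches the metric data (the gauges $\varepsilon_n$ and interval lengths) with the topological data (nowhere density, i.e. the "gaps" in each $F_n$), so that the finitely many constraints active at each stage of the recursion are simultaneously satisfiable. In the forward direction this is the fusion argument ensuring one translate $y$ works for all $F_n$ at once; in the backward direction it is designing the periodic interval pattern of each $F_n$ so that $X+H$ is everything. I would present the argument in the Cantor space / $2^\omega$ formulation when convenient (where "short interval" becomes "basic clopen set $[\beta]$ with $|\beta|$ large" and nowhere dense sets are controlled by their trees), since the combinatorics is cleaner there and transfers to $\mathbb R$; but I should remark that some care is needed because $2^\omega$ and $[0,1]$ differ at countably many points, which do not affect strong measure zero or meagerness. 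I would cite \cite{galvin1973strong} for the original proof and only sketch these steps rather than carrying out the full recursion.
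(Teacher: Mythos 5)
Note first that the paper does not in fact prove this $\mathbb{R}$-version: it is stated with a citation to \cite{galvin1973strong}, and the only detailed proof given is for the Cantor-space analogue stated a few lines later, which is proved via \Cref{XD}. So the useful comparison is with that proof.

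For the implication ``strong measure zero implies $X+H\neq\mathbb R$ for all meager $H$,'' your fusion sketch matches the paper's approach: write $H\subseteq\bigcup_n C_n$ with $C_n$ nowhere dense, and use \Cref{XD} to build recursively a nested sequence $[\gamma_n]$ of basic clopen sets so that $([\gamma_n]+[\sigma_n])\cap C_n=\emptyset$, then $y=\bigcap[\gamma_n]$ gives $(y+X)\cap H=\emptyset$. What you describe informally as the ``bookkeeping matching metric and topological data'' is precisely what \Cref{XD} encapsulates.

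For the converse, though, your contraposition route is a genuine detour and is where your sketch is weakest. You propose to take a gauge $(\varepsilon_n)$ witnessing failure of strong measure zero and then \emph{construct} a meager $H$ with $X+H=\mathbb R$; that is doable but forces you to design a periodic system of intervals, verify that $H$ is genuinely meager, and verify the covering $X+H=\mathbb R$ --- three separate nontrivial checks. The paper's converse is direct and much shorter: fix any gauge $(k_n)$, choose $t_n$ with $|t_n|=k_n$ so that $O=\bigcup[t_n]$ is dense open (interleaving with an enumeration of basic clopen sets makes this possible), and set $C=2^\omega\setminus O$, a closed nowhere dense set. The hypothesis applied to the meager set $C$ yields $z$ with $X\cap(C+z)=\emptyset$, i.e.\ $X\subseteq O+z$, which is exactly a cover of $X$ by basic clopen sets of the prescribed sizes. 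No contraposition and no ad hoc meager set are needed: the gauge itself hands you the nowhere dense set. The identical argument works on $\mathbb R$ with intervals $I_n$ of length $\varepsilon_n$. Finally, I would be more careful than ``transfers to $\mathbb R$ up to countably many points'': $(2^\omega,+)$ and $(\mathbb R,+)$ are not isomorphic topological groups, and --- as the paper itself remarks, citing Wohofsky --- the theorem \emph{fails} in the Baire space, so the group structure matters and the transfer is not formal; one reproves the theorem separately in each group.
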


We will work on the Cantor space so we translate the definition of strong measure zero sets from $\mathbb{R}$ to $2^\omega$.

\begin{df}
    A set $A \subseteq 2^\omega$ has strong zero measure $(A\in \mathcal{SMZ})$ when for every sequence $(k_n)$ of natural numbers there exists a sequence $(\sigma_n)$ such that $|\sigma_n|=k_n$ and $A\subseteq \bigcup [\sigma_n]$ .
\end{df}

In his PhD Thesis "Special sets of real numbers and variants of the Borel Conjecture"  \cite{Wohofsky}, Wohofsky proved that the Galvin-Mycielski-Solovay Theorem holds in $2^\kappa$ where $\kappa$ is weakly compact and is equipped with the topology generated by the collection $\left\{[s]: s \in 2^{<\kappa}\right\}$.

Also in his thesis, he showed that the Galvin-Mycielski-Solovay Theorem doesn't hold in the Baire space.

To prove the Galvin-Mycielski-Solovay Theorem in the Cantor space, we will need the following Lemma.

\begin{lm} \label{XD}
    For every nowhere dense set $C$ we have: $$\forall m \exists k (\forall \alpha\in2^m)( \forall \beta\in 2^{m+k})( \exists\gamma \in 2^{m+k}) (\alpha\subseteq \gamma) (([\gamma]+[\beta])\cap C =\emptyset).$$
\end{lm}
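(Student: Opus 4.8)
We want to show that for every nowhere dense $C \subseteq 2^\omega$ and every $m$, there is $k$ such that every pair $(\alpha, \beta)$ with $\alpha \in 2^m$, $\beta \in 2^{m+k}$ admits an extension $\gamma \in 2^{m+k}$ of $\alpha$ with $([\gamma]+[\beta]) \cap C = \emptyset$. The first observation is that the clopen set $[\gamma]+[\beta]$ is easy to compute: since $\beta$ has length $m+k$ and $\gamma$ has length $m+k$, the algebraic sum $[\gamma]+[\beta]$ is exactly $[\gamma \oplus \beta]$ where $\gamma \oplus \beta$ is the bitwise XOR (mod $2$ addition) of the two finite strings, again a string of length $m+k$. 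So the requirement $([\gamma]+[\beta]) \cap C = \emptyset$ is just $[\gamma\oplus\beta] \cap C = \emptyset$, i.e. $\gamma \oplus \beta$ is a string whose basic clopen set misses $C$.

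**Reduction.** Fix $m$. The key point is that $C$ nowhere dense means: for every $\delta \in 2^m$ there is a length $\ell_\delta$ and a string $\tau_\delta \in 2^{\ell_\delta}$ extending $\delta$ with $[\tau_\delta] \cap C = \emptyset$. Since there are only finitely many $\delta \in 2^m$, I would let $k$ be large enough that $m+k \geq \ell_\delta$ for all $\delta \in 2^m$; more carefully, by extending each $\tau_\delta$ arbitrarily I may assume all the $\tau_\delta$ have the common length $m+k$, still with $[\tau_\delta]\cap C = \emptyset$ and $\tau_\delta \supseteq \delta$. Now given $\alpha \in 2^m$ and $\beta \in 2^{m+k}$, write $\beta = \beta_0 {}^\frown \beta_1$ with $\beta_0 \in 2^m$. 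Consider the string $\delta := \alpha \oplus \beta_0 \in 2^m$ and pick the corresponding $\tau_\delta \in 2^{m+k}$ with $\tau_\delta \supseteq \delta$ and $[\tau_\delta]\cap C=\emptyset$. I want $\gamma \in 2^{m+k}$ with $\gamma \supseteq \alpha$ and $\gamma \oplus \beta = \tau_\delta$. Define $\gamma := \tau_\delta \oplus \beta$. Then $\gamma \oplus \beta = \tau_\delta$, so $[\gamma\oplus\beta]\cap C = \emptyset$ as desired. It remains to check $\gamma \supseteq \alpha$: on the first $m$ coordinates, $\gamma$ agrees with $\tau_\delta \oplus \beta_0 = \delta \oplus \beta_0 = (\alpha \oplus \beta_0)\oplus \beta_0 = \alpha$ using that $\tau_\delta$ extends $\delta$ and XOR is involutive. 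So $\gamma$ works.

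**Main obstacle.** The only real subtlety is getting the quantifier structure of the choice of $k$ right: $k$ must be chosen uniformly in $\alpha$ and $\beta$ but is allowed to depend on $m$ (and on $C$), and this is exactly what the finiteness of $2^m$ buys us — we take $k$ to dominate all the finitely many "escape lengths" $\ell_\delta - m$. The algebraic identity $[\gamma]+[\beta] = [\gamma\oplus\beta]$ for equal-length strings should be recorded explicitly (it follows because $x \in [\gamma]+[\beta]$ iff $x = y+z$ with $y \supseteq \gamma$, $z\supseteq \beta$, and coordinatewise $x(i) = \gamma(i)+\beta(i) \bmod 2$ for $i < m+k$, while for $i \geq m+k$ there is no constraint). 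With that in hand the rest is the bookkeeping above. I would present it as: first the XOR identity, then the choice of $k$ via nowhere density, then the explicit definition $\gamma = \tau_{\alpha\oplus\beta\restriction m}\oplus\beta$ and the two-line verification.
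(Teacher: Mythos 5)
Your proof is correct and is essentially the same argument as the paper's: fix $m$, use nowhere density to find for each of the finitely many strings in $2^m$ an avoiding extension, take $k$ to dominate all their lengths, and then given $\alpha$ and $\beta$ extend $\alpha \oplus (\beta\restriction m)$ to a string $\tau$ with $[\tau]\cap C=\emptyset$ and set $\gamma = \tau \oplus \beta$. The one thing you make explicit that the paper leaves implicit is the identity $[\gamma]+[\beta]=[\gamma\oplus\beta]$ for equal-length strings (and the resulting check that $\gamma\restriction m=\alpha$), which is a worthwhile addition but not a different approach.
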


\begin{proof}
    Take any level $m.$ We have there $2^m$  of $\alpha$ and for every $\alpha$ we have $k$ such that there exists $\delta \in 2^{m+k}$, $\alpha \subseteq \delta$ and $[\delta]\cap C=\emptyset$. Take biggest of those $k.$ 

    Take any  $\alpha \in 2^m$ and any $\beta\in 2^{m+k}$. Take $\alpha+\beta\upharpoonright m$. There exists  $\delta$ such that $ \alpha+\beta\upharpoonright m\subseteq\delta$, $|\delta|=m+k$ and $[\delta]\cap C=\emptyset.$ Let $\gamma$ be such that $\beta +\gamma =\delta.$

\end{proof}


\begin{thm}(Galvin-Mycielski-Solovay)  
           A set $X\subseteq 2^\omega$ has strong measure zero if and only if for every meager set $H$ we have $X+H \neq 2^\omega$. 
\end{thm}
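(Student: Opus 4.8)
The plan is to prove both implications of the Galvin-Mycielski-Solovay theorem in $2^\omega$ by exploiting the basis of clopen sets together with \Cref{XD}.

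\textbf{The easy direction} ($\Leftarrow$, or rather its contrapositive). Suppose $X$ does not have strong measure zero. Then there is a sequence $(k_n)$ witnessing this: no sequence of basic clopen sets $[\sigma_n]$ with $|\sigma_n| = k_n$ covers $X$. I would build a meager set $H$ such that $X + H = 2^\omega$. The idea is that for each $y \in 2^\omega$, the translate $y + X$ again fails to be covered by any sequence $([\sigma_n])$ with $|\sigma_n|=k_n$ (translation just permutes basic clopen sets of a fixed length), so for a fixed sequence of "forbidden" clopen sets one can always find a point of $X$ landing in a prescribed small clopen set after translation. Concretely, I would use a bookkeeping argument: enumerate $2^\omega$ (or rather argue uniformly) and construct a closed nowhere dense set $F$ as an intersection of a decreasing sequence of clopen sets with lengths growing according to the $k_n$, arranged so that its complement $H = F^c$ — or a countable union of such sets — satisfies: for every $y$, some point of $X$ is sent into $F$ by translation by $-y$, i.e. $y \in X + F$. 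This gives $X + H \supseteq X + F = 2^\omega$ with $H$ meager. (Some care is needed about whether one gets a single nowhere dense $F$ or a meager $H$; the standard argument produces a single closed nowhere dense set, since strong measure zero is already characterized by single nowhere dense sets via a diagonalization over the $\varepsilon_n$.)

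\textbf{The hard direction} ($\Rightarrow$). Suppose $X \in \mathcal{SMZ}$ and let $H$ be meager, say $H \subseteq \bigcup_m C_m$ with each $C_m$ closed nowhere dense (WLOG increasing). I must find $y$ with $(X + y) \cap H = \emptyset$, equivalently a single translate of $X$ avoiding $\bigcup C_m$. The plan is to apply \Cref{XD} iteratively: starting from level $m_0 = 0$, \Cref{XD} gives a $k$ so that, working at level $m_0 + k =: m_1$, for every $\alpha \in 2^{m_0}$ and every $\beta \in 2^{m_1}$ there is $\gamma \in 2^{m_1}$ extending $\alpha$ with $([\gamma] + [\beta]) \cap C_0 = \emptyset$. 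I want to use the strong measure zero property of $X$ with the sequence of gaps $k_n = m_{n+1} - m_n$ to get $\sigma_n \in 2^{m_{n+1}}$ with $X \subseteq \bigcup_n [\sigma_n]$; simultaneously I build a decreasing sequence of clopen sets $[\tau_n]$, $\tau_n \in 2^{m_n}$, $\tau_{n+1} \supseteq \tau_n$, whose intersection determines the desired translation point $y = \bigcup_n \tau_n$, chosen by \Cref{XD} so that $([\tau_{n+1}] + [\sigma_n]) \cap C_n = \emptyset$. Then for any $x \in X$, $x \in [\sigma_n]$ for infinitely many $n$ (or, after refining the cover, for all large $n$), so $x + y \in [\sigma_n] + [\tau_{n+1}]$, which misses $C_n$; since the $C_n$ are increasing this forces $x + y \notin C_m$ for all $m$, hence $x + y \notin H$. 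Therefore $(X+y) \cap H = \emptyset$ and $X + H \neq 2^\omega$.

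\textbf{The main obstacle} is the synchronization in the hard direction: \Cref{XD} as stated chooses $\gamma$ depending on both $\alpha$ (the constraint from the point $y$ being built) and $\beta$ (a clopen piece of the cover of $X$), and I need to interleave the choice of the cover $(\sigma_n)$ of $X$ — which must be fixed once the gap sequence $(k_n)$ is known — with the choice of $(\tau_n)$, which depends on the $\sigma_n$. The resolution is that the $k_n$ can be determined in advance: run \Cref{XD} with $m_0 = 0$ to get $k_0$, set $m_1 = k_0$, run it again with $m_1$ and $C_1$ to get $k_1$, and so on; this yields the sequence $(k_n)$ before consulting $X$ at all. Only then invoke strong measure zero to obtain $(\sigma_n)$, and finally build $(\tau_n)$ by the existential clause of \Cref{XD} at each stage. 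A minor technical point to handle carefully is ensuring every $x \in X$ lies in $[\sigma_n]$ for cofinally many $n$ (so that the "increasing $C_n$" trick applies): this is automatic if one is slightly careful, e.g. by covering $X$ at every level or by passing to the fact that $x \in [\sigma_n]$ for at least one $n$ and the $[\sigma_n]$ can be taken nested-compatible, but it is the spot where one must not be sloppy.
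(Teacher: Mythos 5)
Your proposal matches the paper's argument in both directions. For $(\Leftarrow)$ the paper argues directly rather than by contrapositive, but the key object is the same: given $(k_n)$, choose a dense open $O=\bigcup[t_n]$ with $|t_n|=k_n$, hand its closed nowhere dense complement $C$ to the hypothesis, and observe that any $z\notin X+C$ yields $X\subseteq O+z$, which is a cover by clopen sets of the prescribed lengths (your sketch builds the same $C$ but frames it as producing a counterexample $H$). For $(\Rightarrow)$ the paper iterates \Cref{XD} exactly as you describe: the gap sequence is pre-computed from the $C_n$'s alone, strong measure zero then produces $(\sigma_n)$, and the translation point $y$ is assembled from the existential clause. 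Your flagged "main obstacle" is a genuine one and is worth emphasizing: if each $x\in X$ lies in only one $[\sigma_m]$, the construction guarantees $y+x\notin C_m$ for that single $m$, not for all $n$. The standard repair is the one you allude to: replace $C_n$ by $\bigcup_{i\le n}C_i$, and split $\omega$ into infinitely many infinite blocks, applying strong measure zero on each block so that every $x$ is covered cofinally often. The paper's own writeup is silent on this step and, as literally stated (take the $C_n$ arbitrary and cover $X$ once), has precisely the gap you identified; your caution here is the more careful account.
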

\begin{proof}
$(\Longleftarrow)$ Suppose $X\subseteq 2^\omega$ is such that for every meager set $H$ we have $X+H\neq 2^\omega$. To show that $X\in \mathcal{SMZ}$ take any sequence $(k_n: n\in\omega)$. Let $(t_n)$ be such that  $|t_n|=k_n$ and $O=\bigcup[t_n]$ is a dense subset of $2^\omega$. Set $C=2^\omega \backslash O.$ Since $X+C\neq 2^\omega$ there exists $z\in 2^\omega$ such that $X\cap (C+z)=\emptyset$. So $X\subseteq O+z$. Hence, $X$ has strong measure zero.

$(\Longrightarrow)$ Take any $H \in \mathcal{M}$, then $H\subseteq \bigcup C_n$, where $C_n$ are nowhere dense sets.

Apply \Cref{XD} for $m=0$ and $C_0$, we get $k_0$.
Now apply \Cref{XD} for $m=k_0$ and $C_1$, we get $k_1$.  When we continue with this procedure, we get a sequence of $k_n$.

Since $X\in \mathcal{SMZ}$,   $X\subseteq\bigcup[\sigma_n]$, where $|\sigma_n|=k_n.$ 

For $\sigma_0$ we take $\gamma_0$ for $C_0$ from \Cref{XD} such that $([\gamma_0]+[\sigma_0])\cap C_0 =\emptyset$.  For $\sigma_1$ apply \Cref{XD} for $\alpha=\gamma_0$ and $\beta=\sigma_1$ we get $\gamma_1$ such that $([\gamma_1]+[\sigma_1])\cap c_1=\emptyset.$ In $n+1-th$ step apply \Cref{XD} for $\alpha=\gamma_{n-1}$ and $\beta=\sigma_n$. We get $([\gamma_n]+[\sigma_n])\cap C_n =\emptyset.$

Let $y=\bigcap [\gamma_n]$, then $(y+X)\cap H=\emptyset$.
   
\end{proof}

This Theorem naturally leads to another definition of strongly meager sets.

\begin{df}
A set $X \subseteq 2^{\omega}$ is strongly meager $(X\in \mathcal{SM})$ if for every measure zero set $H$ it holds that $X+H \neq 2^\omega$.
\end{df}

Using operation $^*$, we have 
$$\mathcal{SM}=\mathcal{N}^*,\quad \mathcal{SMZ}=\mathcal{M}^*.$$
 
In 1919, Borel conjectured that all strong measure zero sets are countable. It was later shown by Laver in \cite{BC} that this conjecture is consistent with the axioms of ZFC.

Similarly, the dual Borel Conjecture states that all strongly meager sets are countable. It was shown by Carlson in \cite{dBC} that this conjecture is also consistent with the axioms of ZFC.

In 2011 Martin Goldstern, Jakob Kellner, Saharon Shelah and Wolfgang Wohofsky showed  in \cite{BCBC} that it is consistent that the Borel Conjecture and the dual Borel Conjecture hold simultaneously.

\begin{thm}
Borel Conjecture  $\Longrightarrow \mathcal{M} \neq \mathcal{M}^{**}$.
\end{thm}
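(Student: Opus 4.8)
The plan is to use the characterization from \Cref{ccc}: to show $\mathcal{M} \neq \mathcal{M}^{**}$, it suffices to exhibit a set $A \notin \mathcal{M}$ such that $(\mathcal{M} \cup \{A\})^* = \mathcal{M}^*$. Recall that $\mathcal{M}^* = \mathcal{SMZ}$, the ideal of strong measure zero sets. Under the Borel Conjecture, every strong measure zero set is countable, so $\mathcal{M}^* = \mathcal{SMZ} = Count$ (one inclusion is the Borel Conjecture, the other is the standard fact that countable sets have strong measure zero in $2^\omega$). So the goal becomes: find a non-meager set $A$ with $(\mathcal{M} \cup \{A\})^* = Count$ as well; equivalently, every countable set $X$ still satisfies $X + A \neq 2^\omega$, and conversely no uncountable set does.

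First I would take $A$ to be any non-meager set that is itself strong measure zero — wait, that cannot exist since $\mathcal{SMZ}\subseteq\mathcal{N}$ and... actually the point is different. The right choice: since we want $(\mathcal{M}\cup\{A\})^* = \mathcal{M}^* = Count$, and the inclusion $(\mathcal{M}\cup\{A\})^* \subseteq \mathcal{M}^* = Count$ is automatic from $\mathcal{M} \subseteq \mathcal{M}\cup\{A\}$ together with the Borel Conjecture, the only thing to check is the reverse inclusion $Count \subseteq (\mathcal{M}\cup\{A\})^*$, i.e. for every countable $X$ and every $F \in \mathcal{M}\cup\{A\}$ we have $X + F \neq 2^\omega$. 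For $F \in \mathcal{M}$ this is immediate since $X$ (being countable, hence strong measure zero) lies in $\mathcal{M}^*$. So it remains only to ensure that for every countable $X$, $X + A \neq 2^\omega$ — that is, $A \in Count^*$. Thus I need a set $A$ that is non-meager but belongs to $Count^*$; the existence of such a set follows from \Cref{eeee}, since $\mathcal{N} \subseteq Count^*$ and there are non-meager (indeed measure zero, e.g. a measure-zero set whose complement is meager) sets. Concretely, decompose $2^\omega = A \cup B$ with $A \in \mathcal{N}$, $B \in \mathcal{M}$; then $A$ is non-meager (its complement $B$ is meager so $A$ cannot be meager, else $2^\omega$ would be meager) and $A \in \mathcal{N} \subseteq Count^*$.

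The key steps in order: (1) record that $\mathcal{M}^* = \mathcal{SMZ}$, and that under the Borel Conjecture $\mathcal{SMZ} = Count$; (2) fix a non-meager $A \in \mathcal{N}$ via the meager–null decomposition of $2^\omega$; (3) verify $A \notin \mathcal{M}$; (4) verify $(\mathcal{M}\cup\{A\})^* = Count$ by checking both inclusions — $\subseteq$ from monotonicity and BC, $\supseteq$ from the fact that any countable $X$ satisfies $X+A\neq 2^\omega$ (as $A\in\mathcal{N}\subseteq Count^*$) and $X+F\neq 2^\omega$ for $F\in\mathcal{M}$ (as $X\in\mathcal{SMZ}=\mathcal{M}^*$); (5) conclude via \Cref{ccc} that $\mathcal{M}\neq\mathcal{M}^{**}$, witnessed by $A$.

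I expect the main (though modest) obstacle to be step (1): making sure that the Borel Conjecture genuinely forces $\mathcal{M}^* = Count$ rather than something merely contained in it, and being careful that "$\mathcal{SMZ} \supseteq Count$" holds in $2^\omega$ with the combinatorial definition of $\mathcal{SMZ}$ given in the paper (it does — a single point, hence any countable set, is easily covered by clopen sets of prescribed lengths using a diagonal argument over the countably many points). Everything else is bookkeeping with the basic properties of $^*$ already established. One subtlety worth a sentence: we must confirm $A \neq \mathcal{F}$-element only in the sense $A \notin \mathcal{M}$, which is exactly what the meager–null decomposition gives, so \Cref{ccc} applies with $\mathcal{F} = \mathcal{M}$.
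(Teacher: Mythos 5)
Your proof is correct, but it takes a longer route than the paper's. The paper's own argument is a direct three-step computation: under the Borel Conjecture, $\mathcal{M}^* = \mathcal{SMZ} = Count$, hence $\mathcal{M}^{**} = Count^*$, and since $\mathcal{M} \neq Count^*$ (the consequence of \Cref{eeee} recorded just after it), we are done. You instead route through \Cref{ccc}, exhibiting an explicit witness $A\notin\mathcal{M}$ for which $(\mathcal{M}\cup\{A\})^* = \mathcal{M}^*$, namely a non-meager null set obtained from the meager--null decomposition of $2^\omega$. Your verification that $(\mathcal{M}\cup\{A\})^* = Count$ is sound: the forward inclusion is monotonicity plus BC, and the reverse uses $Count\subseteq\mathcal{SMZ}=\mathcal{M}^*$ together with $A\in\mathcal{N}\subseteq Count^*$. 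In effect you are re-deriving $\mathcal{M}\neq Count^*$ by producing the set $A\in Count^*\setminus\mathcal{M}$ explicitly and then feeding it through the Horbaczewska--Lindner criterion; that extra machinery isn't needed here, since once one knows $\mathcal{M}^{**}=Count^*$ the conclusion is immediate. Your version does have the modest advantage of identifying concretely which non-meager set witnesses the failure of $**$-closure, whereas the paper's proof treats that fact as a black box.
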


\begin{proof}
From the Borel Conjecture, we know that $\mathcal{SMZ} = Count$. So, $\mathcal{M}^{*} =Count$, which implies that $\mathcal{M}^{**} = Count^{*}$. Since $\mathcal{M} \neq Count^{*}$, we obtain that $\mathcal{M} \neq \mathcal{M}^{**}$

\end{proof}
Similarly, we know that the dual Borel Conjecture $\Longrightarrow \mathcal{N} \neq \mathcal{N}^{**}$.

\section{Porous sets and microscopic sets}

The modern notion of porosity was formulated by J.Väisälä in \cite{PpP}.
He proved that porosity is preserved under quasisymmetric mappings and that every porous set in 
$\mathbb{R}^n$  has Hausdorff dimension strictly less than $n$.

\begin{df}
    We say that $X\subseteq \mathbb{R}$ is porous $(X\in \mathcal{P})$ if there exists $\alpha \in (0,1)$ for all $x\in \mathbb{R}$ and any $\varepsilon$ there exists $y\in \mathbb{R}$ such that $(y-\alpha \varepsilon, y+\alpha \varepsilon)\subseteq (x-\varepsilon, x+\varepsilon) \backslash A. $
\end{df}

We can now translate this definition to the Cantor space.

\begin{df}
We say that $X\subseteq 2^{\omega}$ is porous $(X\in \mathcal{P})$ if there exist $k$ for every $\alpha \in 2^{m}$ there exists $\beta \in 2^{m+k}$, $\alpha \subseteq \beta$ and $[\beta]\cap X=\emptyset.$   
  \end{df}

  \begin{df}
We say that $X\subseteq 2^{\omega}$ is $k$-porous $(X\in k\mathcal{P})$  for every $\alpha \in 2^{m}$ there exists $\beta \in 2^{m+k}$, $\alpha \subseteq \beta$ and $[\beta]\cap X=\emptyset.$   
  \end{df}

  \begin{df}
      We say that $X\subseteq 2^{\omega}$ is $\sigma$-porous $(X\in \sigma\mathcal{P})$ if it is a countable union of porous sets.
  \end{df}
\begin{thm}
        Every porous set has measure zero.

\end{thm}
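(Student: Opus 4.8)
The plan is to exploit the definition of porosity directly: a porous set $X \subseteq 2^\omega$ has a fixed parameter $k$ such that above every node $\alpha \in 2^m$ there is an extension $\beta \in 2^{m+k}$ with $[\beta] \cap X = \emptyset$. First I would fix such a $k$ for $X$. The idea is to build a decreasing sequence of clopen covers of $X$, each step throwing away a fixed fraction of the measure. Concretely, start with the full space, which is covered by the $2^m$ basic clopen sets $[\alpha]$ for any level $m$; then partition into blocks of length $k$. Within each $[\alpha]$ at level $m$, porosity gives us some $[\beta]$ at level $m+k$ disjoint from $X$, so $X \cap [\alpha] \subseteq [\alpha] \setminus [\beta]$, and $\lambda([\alpha] \setminus [\beta]) = (1 - 2^{-k})\lambda([\alpha])$.

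Summing over all $\alpha$ at level $m$, I obtain a clopen set $U_m \supseteq X$ (the union of the surviving pieces) with $\lambda(U_m) \leq (1 - 2^{-k})\lambda(U_{m-k})$, starting from $U_0 = 2^\omega$. Iterating through levels $0, k, 2k, 3k, \dots$ yields clopen sets $U_{nk} \supseteq X$ with $\lambda(U_{nk}) \leq (1 - 2^{-k})^n$. Since $0 < 1 - 2^{-k} < 1$, the right-hand side tends to $0$ as $n \to \infty$. Because $X \subseteq U_{nk}$ for every $n$, we get $\lambda(X) \leq \inf_n (1-2^{-k})^n = 0$ (using outer measure, or noting $X$ is contained in the $G_\delta$ set $\bigcap_n U_{nk}$ which has measure zero). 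Hence $\lambda(X) = 0$.

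The one point that needs a little care, rather than being a genuine obstacle, is bookkeeping the recursion on levels correctly: at stage $n$ the cover $U_{nk}$ is a union of basic clopen sets $[\alpha]$ with $|\alpha| = nk$, and I must apply the porosity condition to \emph{each} such $\alpha$ separately to pass to level $(n+1)k$, checking that the fraction $2^{-k}$ of measure removed is uniform across all of them — which it is, since $k$ is a single constant for $X$ and $[\beta] \subseteq [\alpha]$ always has relative measure exactly $2^{-k}$ inside $[\alpha]$. No measurability issue arises for $X$ itself because we only ever bound its outer measure from above by the measures of genuine clopen (hence measurable) supersets. So the argument is essentially a geometric-decay estimate, and there is no real hard part beyond setting up the iteration cleanly.
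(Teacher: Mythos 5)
Your proof is correct and follows essentially the same geometric-decay strategy as the paper: at each stage you remove a fixed positive fraction of the remaining measure using the porosity parameter, yielding a shrinking sequence of clopen covers of $X$ whose measures tend to $0$. The only difference is that you carry out the argument directly in $2^\omega$ using the level-$k$ dyadic structure (removing one of the $2^k$ extensions in each block, so the fraction removed is exactly $2^{-k}$), whereas the paper translates to $[0,1]$ and works with the real-line porosity parameter $\alpha$; your version is, if anything, the more natural one given that the theorem sits in the Cantor-space section.
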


\begin{proof}
We will make this proof on the interval $[0,1]$.  
Take porous set $X$ and take $\alpha$ associated with this set. Since it is porous, there exists $I_{\sigma_0}\subseteq[0,1]$ such that $|I_{\sigma_0}|=\alpha$ and $I_{\sigma_0}\cap X=\emptyset$. Next in $[0,1]\backslash I$ there exists $I_{\sigma_0},I_{\sigma_1}\subseteq [0,1]\backslash I_{\sigma_0}$ such that $|I_{\sigma_0}|+|I_{\sigma_1}|=(1-\alpha)\alpha$ and $I_{\sigma_0}\cap X=\emptyset$ and $I_{\sigma_1}\cap X=\emptyset$.

We continue with this procedure, and in the $n-th$ step we have set of intervals of total length of $(1-\alpha)^n $ and we remove from it intervals $I_{\sigma_0}, ..., I_{\sigma_{2^n}}$ of total length of $(1-\alpha)^{n}\alpha$ such that for every $\sigma_n$ we have $I_{\sigma_n}\cap X=\emptyset$. 

The sum of the lengths of the intervals we removed from $[0,1]$ is of the form:

$$\alpha \sum_{k=0}  (1-\alpha)^k = \frac{\alpha}{1-(1-\alpha)}=1.$$

\end{proof}
Of course, every porous set is nowhere dense. Moreover, since every porous set is contained in a closed porous set, the conclusions follow.

$$\sigma\mathcal{P}\subseteq \mathcal{E}\subseteq \mathcal{M}\cap \mathcal{N}\subseteq\mathcal{N}$$

The concept of upper porous sets was first introduced by Dolzhenko in 
\cite{porowate}, where he established the fundamental properties of these sets.

The notion was first introduced for metric spaces, but here we recall the definition for the real line.

\begin{df}
      We say that $E\subseteq \mathbb{R}$ is upper porous $(X \in \mathcal{P}_{u})$ if for every $x\in E$
            $$\underset{\varepsilon\rightarrow 0^+}{lim sup} \frac{\sup\{b-a:\; (a,b)\subseteq (x-\varepsilon, x+\varepsilon)\setminus E\}}{\varepsilon}>0.$$

\end{df}

  \begin{df}
      We say that $X\subseteq 2^{\omega}$ is $\sigma$-upper porous $(X\in \sigma\mathcal{P}_u)$ if it is a countable union of upper porous sets.
  \end{df}
\begin{df}
    Let $A \subseteq \mathbb{R}$ be a measurable set. 
A point $x \in \mathbb{R}$ is called a point of density $1$ of $A$ if
\[
\lim_{r \to 0} \frac{\lambda\big(A \cap B(x,r)\big)}{\lambda\big(B(x,r)\big)} = 1,
\]

\end{df}
\begin{thm}
    Every $X\in\mathcal{P}_{u}$ set has measure zero.
\end{thm}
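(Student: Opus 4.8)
The plan is to show that a set $X \in \mathcal{P}_u$ cannot have positive Lebesgue measure by invoking the Lebesgue density theorem: if $\lambda(X) > 0$, then almost every point of $X$ is a point of density $1$ of $X$. I would fix such a density point $x \in X$ and derive a contradiction with the upper porosity condition at $x$. The key observation is that the density condition forces the complement $\mathbb{R} \setminus X$ to be "thin" near $x$, which is incompatible with the existence of complementary intervals of length comparable to $\varepsilon$ inside $(x-\varepsilon, x+\varepsilon)$.

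First I would unpack the definition of upper porosity at the density point $x$: there is a constant $c > 0$ and a sequence $\varepsilon_n \to 0^+$ such that for each $n$ there is an interval $(a_n, b_n) \subseteq (x - \varepsilon_n, x + \varepsilon_n) \setminus X$ with $b_n - a_n \geq c\, \varepsilon_n$. Such an interval is entirely disjoint from $X$, so
\[
\lambda\big(X \cap (x - \varepsilon_n, x + \varepsilon_n)\big) \leq \lambda\big((x-\varepsilon_n, x+\varepsilon_n)\big) - (b_n - a_n) \leq 2\varepsilon_n - c\,\varepsilon_n = (2 - c)\,\varepsilon_n.
\]
Dividing by $\lambda(B(x,\varepsilon_n)) = 2\varepsilon_n$ gives
\[
\frac{\lambda\big(X \cap B(x,\varepsilon_n)\big)}{\lambda\big(B(x,\varepsilon_n)\big)} \leq \frac{2 - c}{2} = 1 - \frac{c}{2} < 1
\]
for all $n$. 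Since $\varepsilon_n \to 0$, this contradicts the assumption that $x$ is a point of density $1$ of $X$, where the corresponding ratios must converge to $1$.

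The main (and essentially only) obstacle is purely bookkeeping: the stated definition of upper porosity uses $\limsup$ of $\sup\{b-a : (a,b) \subseteq (x-\varepsilon,x+\varepsilon)\setminus E\}/\varepsilon$, so one must first extract from a positive $\limsup$ an honest sequence $\varepsilon_n \to 0^+$ along which the ratio stays bounded below by a fixed positive constant, and then, for each such $\varepsilon_n$, extract a genuine complementary interval whose length is at least a fixed fraction of $\varepsilon_n$ (using that the supremum is approached). A minor subtlety is that $X$ must be measurable for the density theorem to apply; I would note that this is part of the standing hypothesis, or restrict attention to a measurable hull if needed, since replacing $X$ by a measurable superset only shrinks the complementary gaps and hence preserves the contradiction. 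Once these reductions are in place, the displayed estimate above closes the argument immediately.
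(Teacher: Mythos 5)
Your proposal is correct and follows essentially the same route as the paper's own (much terser) argument: invoke the Lebesgue density theorem to produce a density-$1$ point of $X$, then observe that upper porosity at that point forces the density ratio along some sequence $\varepsilon_n \to 0^+$ to stay bounded away from $1$, a contradiction. You merely spell out the quantitative estimate and flag the measurability issue, both of which the paper leaves implicit.
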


\begin{proof}
    Assume there exists a set $A$ that is upper porous and $\lambda(A)>0$. Because $A$ has a positive measure, then there exists a point $a\in A$ of density $1$. But then $\underset{\varepsilon\rightarrow 0^+}{lim sup} \frac{\sup\{b-a:\; (a,b)\subseteq (x-\varepsilon, x+\varepsilon)\setminus E\}}{\varepsilon}=0$, so $A$ cannot be upper porous.
    
\end{proof}

In \cite{hruvsak2012cardinal}, Zindulka and Hru{\v{s}}{\'a}k extended the notion of upper porous sets to the Cantor space.

\begin{df}
We say that $X\subseteq 2^{\omega}$ is upper porous $(X\in \mathcal{P}_{u})$ if for all $x\in X$ there exists $K$ such that for infinitely many $n$ there exist $\beta\supseteq x\upharpoonright n$, $|\beta|=n+K$, and $[\beta]\cap X=\emptyset$.   
  \end{df}

We have the following conclusion:

$$\sigma\mathcal{P}\subseteq\sigma\mathcal{P}_{u}\subseteq \mathcal{E}\subseteq \mathcal{M}\cap \mathcal{N}\subseteq\mathcal{N}$$

An example of a set that is upper porous but not $\sigma$-porous was given 
independently by Klinga, Nowik, and Wąsik in \cite{Nowik}; however, our example differs. 

\begin{thm}
There exists $X\in \mathcal{P}_{u}$ such that $X\notin \sigma\mathcal{P}.$
\end{thm}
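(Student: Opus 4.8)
The plan is to construct a single set $X \subseteq 2^\omega$ which is upper porous in the sense of the Cantor-space definition but which cannot be written as a countable union of porous sets. The guiding idea is that upper porosity only requires, for each point $x \in X$, a \emph{single} constant $K$ working at \emph{infinitely many} levels $n$, whereas membership in a porous set requires a constant $k$ working at \emph{all} levels (cofinally, from some point on, with a uniform gap). So if we can arrange that the ``porosity gap'' around points of $X$ is bounded along a sparse sequence of levels but becomes arbitrarily bad (arbitrarily large required gaps, or no gap at all) along the complementary levels, then $X$ will be upper porous while defeating any fixed porous set.

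Concretely, I would fix a fast-growing sequence of levels $n_0 < n_1 < n_2 < \cdots$ and build $X$ as a closed set (the branches through a subtree $T \subseteq 2^{<\omega}$) so that: (i) at each level $n_j$, above every node of $T \cap 2^{n_j}$ there is an extension of length $n_j + 1$ (say) that is entirely off $T$ — this gives every branch a porosity witness with $K = 1$ at infinitely many levels, hence $X \in \mathcal{P}_u$; and (ii) on the ``long stretches'' of levels strictly between $n_j$ and $n_{j+1}$, the tree $T$ is arranged to be ``full enough'' that no uniform porosity gap $k$ can work there — e.g. for each fixed $k$, once $j$ is large the stretch $(n_j, n_{j+1})$ is longer than $k$ and $T$ restricted to that block of levels contains a full binary block (every binary string of length $k$ appears as a continuation), so a set with porosity constant $k$ cannot cover $X$. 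The key counting point is: a porous set $P$ with constant $k$ has, above every node at every level $m$, a forbidden clopen set $[\beta]$ with $|\beta| = m+k$; so $P \cap [\sigma]$ omits a definite positive proportion of the measure of $[\sigma]$ at scale $2^{-(m+k)}$ for \emph{every} $m$. By keeping $X$ of ``full branching type'' on arbitrarily long level-blocks (longer than $k$), we guarantee $X \not\subseteq P$ for that $k$, and then a diagonalization / Baire-category argument over all $P$ in a hypothetical countable cover gives the contradiction.

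The cleanest way to finish is probably to avoid listing all porous sets directly and instead invoke that $\sigma\mathcal{P} \subseteq \mathcal{E}$ (the $\sigma$-ideal generated by closed measure-zero sets, which appears in the displayed inclusions above), or better, to run the category argument inside $X$ itself: if $X = \bigcup_m P_m$ with each $P_m$ porous, then since $X$ is closed (hence Polish) some $\overline{P_m} \cap X$ is somewhere dense in $X$, i.e. contains $X \cap [\sigma]$ for some $\sigma \in T$; but a closed porous set with constant $k$ cannot contain $X \cap [\sigma]$ because on the next full-branching block below $\sigma$ (of length $> k$), $X \cap [\sigma]$ meets every clopen set of the relevant scale, contradicting that $P_m$ omits one. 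So I would build $T$ so that below \emph{every} node of $T$ there are full-branching blocks of every finite length — this ``self-similar'' genericity is what makes the Baire-category step go through uniformly.

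The main obstacle is reconciling the two competing demands on $T$: condition (i) forces $T$ to have, at a cofinal set of levels $n_j$, genuine ``holes'' above every node (so $X$ is thin enough to be upper porous), while condition (ii) forces $T$ to be ``fat'' (full-branching) on arbitrarily long blocks below every node. These are compatible only if the holes at levels $n_j$ are created \emph{uniformly and sparsely} — one forbidden string of length $n_j+1$ per node, then immediately resuming full branching — and one must check carefully that a single point $x \in X$ does not, by bad luck, avoid all the holes: since the hole at level $n_j$ is placed above \emph{every} node of $T\cap 2^{n_j}$, in particular above $x \upharpoonright n_j$, the branch $x$ automatically gets a witness $\beta \supseteq x\upharpoonright n_j$ with $[\beta] \cap X = \emptyset$ at every $j$, so $K=1$ works and upper porosity is immediate. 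Thus the real care is entirely in the fatness bookkeeping for step (ii): making the full-branching blocks long enough, and cofinal below every node, while leaving room for the level-$n_j$ holes. I expect that a bookkeeping argument interleaving ``insert a full block of length $j$'' with ``punch a hole at the next level'' handles this, but writing the recursion so that \emph{every} node sees full blocks of \emph{every} length is the delicate point.
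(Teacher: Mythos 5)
Your proposal is correct, and the blueprint you describe (sparse ``holes'' giving a fixed upper-porosity constant, separated by ever-longer full-branching blocks) is exactly the architecture of the paper's explicit witness $X=\{y\in 2^\omega: \forall n\; y(2^n)=y(2^n+1)=0\}$: the forced zeros at $\{2^n,2^n+1\}$ provide the $K=2$ upper-porosity witnesses cofinally along every branch, and the intervening free blocks of length $2^n-2\to\infty$ are the full-branching stretches. The ``delicate point'' you flag at the end is in fact a non-issue once you adopt this coordinate-fixing formulation: defining $X$ by forcing a sparse set of coordinates to $0$ (with gaps tending to infinity) automatically makes $T$ pruned and gives full-branching blocks of every length cofinally below every node, with no bookkeeping required. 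Where you genuinely diverge from the paper is the finish for $X\notin\sigma\mathcal{P}$: the paper runs a direct recursive diagonalization, building a branch $t\in X$ by alternately padding with zeros over a long free block and then extending by $k_j$ bits to step off $P_j$; you instead propose to observe that $X$ is closed (hence Polish) and that the closure of a $k$-porous set is $k$-porous, then apply the Baire category theorem inside $X$ to find some $\overline{P_m}\cap X\supseteq X\cap[\sigma]$ and derive the contradiction from a full-branching block of length $>k_m$ below $\sigma$. Both finishes are correct; yours is shorter and more conceptual, at the cost of invoking Baire category and the closure-preserves-porosity remark, while the paper's is more elementary and self-contained. You would of course still need to write out the category argument carefully --- in particular, that $\{\overline{P_m}\cap X\}_m$ is a countable cover of $X$ by relatively closed sets, and that nonempty relative interior yields a basic clopen $[\sigma]$ with $X\cap[\sigma]\subseteq\overline{P_m}$ --- but there is no gap in the reasoning.
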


\begin{proof}
Take $X=\{y\in 2^{\omega}:\ \forall n\; y(2^n)=y(2^n +1)=0\}$.

   First we will show that   $X\in \mathcal{P}_{u}$.

    Take any $y\in X$ and let $K=2$. Then for $y\upharpoonright 2^n -1$ and for every $n$ let $\beta=(y\upharpoonright2^n -1 )^\frown1^\frown1$. Of course $[\beta]\cap X =\emptyset$ and we can construct such $\beta$ for every $n$, so $X\in \mathcal{P}_{u}$. 

    Next step is showing that $X\notin \sigma\mathcal{P}.$

     Define $P\in \sigma\mathcal{P}$. We will show that $X\backslash P\neq\emptyset .$ Assume that $P=\bigcup_{j\in \omega} P_j$ where $P_j$ is $k_j$-porous. Without loss of generality the sequence $(k_j)_{j\in\omega}$ is increasing.

     We want to construct sequence $(\tau_n:n\in\omega)$ satisfying following conditions:
     
     \begin{enumerate}
     
     \item $\tau_{n} \subseteq \tau_{n+1}$,
     \item $[\tau_n] \cap P_n =\emptyset$,

         \item if $b\in\{2^a,2^{a}+1: a\in \omega \}\cap dom(\tau_n)$ then $\tau_n (b)=0$.
         
            \end{enumerate} Let $m_{k_j}$ be smallest $m$ such that $k_j<2^m -2$.
     
     Start with $\alpha_0=\underbrace {0^\frown 0^\frown \cdots ^\frown0}_{2^{m_{k_0}}+1}$. Next, take $\tau_0 \supseteq \alpha_0$, $|\tau_0|=|\alpha_0|+k_0$ so that $[\tau_0] \cap P_0=\emptyset$.

    Next in the $n-th$ step, let $\alpha_n= {\tau_{n-1}}^\frown\underbrace {0^\frown 0^\frown \cdots ^\frown0}_{2^{m_{k_n}}+1 -(2^{m_{k_{n-1}}}+1)}$.  Next, we take $|\tau_n|=|\alpha_n|+k_n$ so that $[\tau_n]\cap P_n =\emptyset$. 
    
   Set  $t=\bigcup_{n\in\omega} \tau_n$. Notice that $t \in X$, because of  $(3)$ and          $t \notin \bigcup P_j$ because of $(2).$

\end{proof}

Next, we turn our attention to another ideal: microscopic sets. This concept was studied, for instance, in \cite{KWELA201651}, where Kwela showed that the additivity of the ideal of microscopic sets is always equal to $\omega_1$. In \cite{unknown} O.Zindulka  investigated Hausdorff dimension of microscopic sets in $\mathbb{R}^d$. In \cite{M} E.Wagner-Bojakowska A.Karasińska A.Paszkiewicz introduced some generalizations of microscopic sets.

\begin{df}
       We say that a set $X \subseteq \mathbb{R}$ is microscopic $(X\in Micro)$ if for all $\varepsilon$ there exists a sequence $(I_n)$, such that $|I_n|=\varepsilon^{n}$ and $X\subseteq \bigcup I_n$.
\end{df}

We will consider microscopic sets on Cantor space.

\begin{df}
  In $2^{\omega}$ we say that a set $X \subseteq 2^{\omega} $ is microscopic $(X\in Micro)$ if for all $k\in \mathbb{N}$ there exists a sequence of $(\sigma_n)$ such that $|\sigma_n|=kn$ and $X\subseteq \bigcup [\sigma_n]$.
  \end{df}

It turns out that the Galvin–Mycielski–Solovay theorem almost extends to microscopic and porous sets, but only one direction of the implication remains valid.

The following lemma is crucial for establishing one direction of the Galvin–Mycielski– Solovay theorem in the case of porous and microscopic sets.

\begin{lm} \label{aa}
     If $E\in \mathcal{P}$, then there exists $k\in \mathbb{N}$ such that for all $m\in \mathbb{N}$ and $\alpha \in 2^m$ and all $\tau \in 2^{m+k}$, there exists $\beta \in 2^{m+k}$ such that $\alpha \subseteq \beta$ and $([\tau] + [\beta]) \cap E =\emptyset$.
\end{lm}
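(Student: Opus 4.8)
The plan is to use the definition of porosity directly: since $E \in \mathcal{P}$, there is a uniform constant $k$ such that for every level $m$ and every $\alpha \in 2^m$ there exists $\beta' \supseteq \alpha$ with $|\beta'| = m+k$ and $[\beta'] \cap E = \emptyset$. The content of the lemma over the plain definition of porosity is that we must hit a translate $[\tau] + [\beta]$ of a basic clopen set by $[\beta]$, rather than $[\beta]$ itself; this mirrors exactly the step from a ``naked'' covering statement to the translated version that was carried out in \Cref{XD}. So the strategy is: fix $k$ witnessing porosity of $E$, fix $m$, $\alpha \in 2^m$ and $\tau \in 2^{m+k}$, and manufacture the required $\beta$ by translating the porosity witness by $\tau$.

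In detail: first I would note that addition in $2^\omega$ is coordinatewise mod $2$, so for sequences $s, t$ of the same finite length $\ell$ we have $[s] + [t] = [s + t]$ (as the sum of a point in $[s]$ and a point in $[t]$ agrees with $s+t$ on the first $\ell$ coordinates and ranges over everything afterward). Next, since $|\tau| = m+k \ge m$, write $\tau = (\tau \upharpoonright m)^\frown \tau'$ and apply porosity of $E$ at level $m$ to the node $\alpha + (\tau \upharpoonright m) \in 2^m$: we obtain $\delta \in 2^{m+k}$ with $\alpha + (\tau\upharpoonright m) \subseteq \delta$ and $[\delta] \cap E = \emptyset$. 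Now set $\beta = \delta + \tau$ (coordinatewise sum, both of length $m+k$). Then $\beta \upharpoonright m = (\delta \upharpoonright m) + (\tau \upharpoonright m) = (\alpha + (\tau \upharpoonright m)) + (\tau \upharpoonright m) = \alpha$, so $\alpha \subseteq \beta$; and $[\tau] + [\beta] = [\tau + \beta] = [\tau + \delta + \tau] = [\delta]$, which is disjoint from $E$. This is exactly the conclusion, and the constant $k$ did not depend on $m$, $\alpha$ or $\tau$.

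I do not expect a serious obstacle here; the lemma is essentially a bookkeeping exercise transporting the porosity witness through a translation, entirely parallel to \Cref{XD}. The one point that needs a little care is the identity $[s] + [t] = [s+t]$ for equal-length finite sequences $s,t$ and, relatedly, that $\beta \upharpoonright m$ is computed by adding the restrictions $\delta \upharpoonright m$ and $\tau \upharpoonright m$ (i.e.\ that restriction commutes with coordinatewise addition) — both are immediate from the definitions but worth stating explicitly so the translation argument is transparent. Everything else is substitution.
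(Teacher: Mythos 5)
Your proof is correct and is essentially the same argument as the paper's: both apply porosity at level $m$ to the node $\alpha + (\tau\upharpoonright m)$ to obtain $\delta \in 2^{m+k}$ avoiding $E$, and then set $\beta = \delta + \tau$ so that $\beta\upharpoonright m = \alpha$ and $[\tau]+[\beta]=[\delta]$. Your writeup is in fact cleaner, as you make explicit the identities $[s]+[t]=[s+t]$ and the commutation of restriction with coordinatewise addition, which the paper leaves implicit (and where the paper's text has minor typos, e.g.\ ``$\tau\upharpoonright n$'' for ``$\tau\upharpoonright m$'').
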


\begin{proof}
    Take any $E\in \mathcal{P }$, $\alpha \in 2^{m}$, and $\tau \in 2^{m+k}$ where $k$ is the same as in the set $E$.  Take $\alpha + \tau \upharpoonright n =\gamma$. Since $E$ is porous, there exists $\delta$ such that $|\delta|=m+k$, $\gamma\subseteq \delta$, and $[\delta] \cap E=\emptyset$. Take $\beta$ such that $\alpha \subseteq \beta $ and $\gamma +\beta =\delta$.
         
\end{proof}

Let us remark that an analogous result to the following theorem was announced by O. Zindulka at the 2022 Summer Symposium in Real Analysis in Paris, where it was stated on the real line. Our work establishes this theorem in the Cantor space and provides a detailed, full proof of this version.

\begin{thm}
        For any $X\in Micro$ and any $E\in\mathcal{P}$ we have $X+E\neq 2^\omega$.
        

\end{thm}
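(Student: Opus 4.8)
The plan is to mimic the $(\Longrightarrow)$ direction of the Galvin--Mycielski--Solovay theorem in the Cantor space, using Lemma~\ref{aa} in place of Lemma~\ref{XD}. Fix $X\in Micro$ and $E\in\mathcal{P}$, and let $k$ be the constant associated to $E$ by Lemma~\ref{aa}; note that the key feature of Lemma~\ref{aa}, as opposed to Lemma~\ref{XD}, is that a \emph{single} $k$ works simultaneously for all levels $m$ and all shifts $\tau\in 2^{m+k}$. This uniformity is exactly what lets us handle a microscopic set: since $X\in Micro$, applying the definition to this very $k$ gives a sequence $(\sigma_n)$ with $|\sigma_n|=kn$ and $X\subseteq\bigcup_n[\sigma_n]$.

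Next I would build a fusion sequence $(\gamma_n)$ with $\gamma_n\in 2^{kn}$, $\gamma_n\subseteq\gamma_{n+1}$, and $([\sigma_n]+[\gamma_n])\cap E=\emptyset$ for every $n$. The induction step uses Lemma~\ref{aa} as follows: at stage $n$ we have $\gamma_{n-1}\in 2^{k(n-1)}$; we want to extend it to $\gamma_n\in 2^{kn}=2^{k(n-1)+k}$, so we set $m=k(n-1)$, $\alpha=\gamma_{n-1}\in 2^m$, and $\tau=\sigma_n\in 2^{m+k}$ (here $|\sigma_n|=kn=m+k$, which is why we chose the microscopic parameter to be $k$). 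Lemma~\ref{aa} then produces $\beta\in 2^{m+k}$ with $\alpha\subseteq\beta$ and $([\sigma_n]+[\beta])\cap E=\emptyset$; put $\gamma_n=\beta$. The first step $n=0$ is vacuous or trivial ($\sigma_0$ has length $0$, i.e. $[\sigma_0]=2^\omega$; one may instead start the microscopic decomposition at $n=1$, or note $\gamma_0$ is the empty sequence and handle $n=1$ as the base case).

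Finally, let $y=\bigcup_n\gamma_n\in 2^\omega$ (well defined since the $\gamma_n$ are nested and $|\gamma_n|\to\infty$). I claim $(y+X)\cap E=\emptyset$, which gives $X+E\neq 2^\omega$. Indeed, take any $x\in X$; then $x\in[\sigma_n]$ for some $n\ge 1$, and $y\in[\gamma_n]$, so $y+x\in[\gamma_n]+[\sigma_n]$, which is disjoint from $E$ by construction; hence $y+x\notin E$. Since $x\in X$ was arbitrary, $(y+X)\cap E=\emptyset$, so $y\notin X+E$.

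The only real subtlety is the bookkeeping in the induction: one must check that the level arithmetic lines up, namely that extending from length $k(n-1)$ to length $kn$ is an extension by exactly $k$, matching the ``$+k$'' in Lemma~\ref{aa}, and that the shift parameter $\tau=\sigma_n$ indeed lives in $2^{m+k}$ with $m=k(n-1)$. Everything else is a routine fusion argument identical in spirit to the proof of the Galvin--Mycielski--Solovay theorem already given. I do not anticipate a genuine obstacle; the content is entirely in Lemma~\ref{aa} and in the fact that the microscopic scaling $|\sigma_n|=kn$ is precisely the right growth rate to feed a fixed porosity gap $k$ at each of countably many fusion steps.
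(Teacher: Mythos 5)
Your proposal is correct and follows essentially the same route as the paper: fix $E\in\mathcal{P}$, extract the uniform porosity constant $k$ from Lemma~\ref{aa}, apply the definition of $Micro$ at that same $k$ to get the cover $(\sigma_n)$, and run a fusion argument that produces $y=\bigcup_n\gamma_n$ with $(y+X)\cap E=\emptyset$. The only surface difference is the indexing of the cover — the paper takes $|\sigma_n|=(n+1)k$ while you take $|\sigma_n|=kn$ and then flag the trivial $\sigma_0$ term and shift to $n\geq 1$ — which is exactly the same adjustment the paper makes implicitly, so this is not a genuine divergence.
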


\begin{proof}

Proof of this Theorem is similar to the proof of the Galvin-Mycielski-Solovay theorem on the Cantor space.

    Let $E\in \mathcal{P}$ and take $k$ from \Cref{aa}. Take any $X\in Micro$ and its sequence of $(\sigma_n)$ such that $|\sigma_n|=(n+1)k$ and $X\subseteq \bigcup [\sigma_n]$.
    We want to construct a sequence $(\alpha_n: n<\omega)$ satisfying following conditions:

    \begin{enumerate}
        \item $\alpha_n\subseteq\alpha_{n+1}$,
        \item $[\alpha_n] + [\sigma_n]\cap E =\emptyset$.
    \end{enumerate}

    Apply \Cref{aa} for $\alpha\in 2^0$ and take $\sigma_0$, then there exists $\beta_0 \in 2^k$ such that $[\sigma_0]+[\beta_0]\cap E=\emptyset$ and $\alpha\subseteq\beta$. Take $\alpha_0=\beta_0$.
    
    In $n$-th step apply \Cref{aa} to $\alpha_{n-2}$ and for $\sigma_{n-1}$. Then there exists $\beta_{n-1}\in 2^{nk}$ such that $[\sigma_{n-1}]+ [\beta_{n-1}] \cap E =\emptyset$. Take $\alpha_{n-1}=\beta_{n-1}.$
    
    In the end, we take $y=\bigcup \alpha_n$. Of course, we have $(y+X)\cap E=\emptyset$ because of $(2)$.

      
\end{proof}

In terms of operation $^{*}$ we get:
$$Micro\subseteq\mathcal{P}^{*}, \quad \mathcal{P}\subseteq Micro^{*}.$$

We will use the following Theorem to show that the converse of those inclusions does not hold.

\begin{thm}\label{dddddddd}
    For $M\in Micro$ and $k>0$ there exists  $(\sigma_n)_{n\in\omega}$,  $|\sigma_n|=kn$ such that
    $$
    y\in M\Longrightarrow \exists^{\infty}n\; \sigma_n \subseteq y
    $$

    for every $y\in 2^\omega$.   
\end{thm}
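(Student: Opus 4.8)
The plan is to reduce the statement to the definition of microscopic set, but being careful to produce a single sequence of basic clopen sets that captures membership cofinally rather than just a cover. Fix $M\in Micro$ and $k>0$. Applying the definition of microscopic set to the parameter $2k$, I obtain a sequence $(\tau_n)_{n\in\omega}$ with $|\tau_n|=2kn$ and $M\subseteq\bigcup_n[\tau_n]$. The key observation is that for each fixed $n$, the clopen set $[\tau_n]$ (with $|\tau_n|=2kn$) can be re-indexed: I want to define $\sigma_m$ of length $km$ so that every point of $M$ lies in $[\sigma_m]$ for infinitely many $m$. The natural move is: given $y\in M$, there are infinitely many $n$ with $y\in[\tau_n]$ (if only finitely many, replace the cover argument by noting $M\subseteq\bigcup_{n\ge N}[\tau_n]$ for every $N$, which follows by re-applying the definition with the finitely many offending blocks absorbed — more precisely, apply the definition of $Micro$ with parameter $k$ infinitely often, shifting the start, to get for each $N$ a sequence witnessing $y$ beyond level $N$). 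So a cleaner approach: for each $j\in\omega$, apply the microscopic definition with parameter $k$ to get a sequence $(\sigma^j_n)$ with $|\sigma^j_n|=kn$ and $M\subseteq\bigcup_n[\sigma^j_n]$; then interleave the sequences $(\sigma^j_\cdot)_{j\in\omega}$ into one sequence $(\sigma_m)_{m\in\omega}$ via a bijection $\omega\cong\omega\times\omega$, padding lengths as needed so that the $m$-th entry has length exactly $km$.

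Concretely, here is the step order I would follow. First, fix the bijection $\langle\cdot,\cdot\rangle:\omega\times\omega\to\omega$ and for each $j$ obtain the sequence $(\rho^j_n)_{n\in\omega}$ with $|\rho^j_n|=kn$ and $M\subseteq\bigcup_n[\rho^j_n]$, using the definition of microscopic applied with the constant $k$. Second, for $m=\langle j,n\rangle$, define a candidate clopen set from $\rho^j_n$; since $|\rho^j_n|=kn$ but I need length $km$, I extend $\rho^j_n$ to length $km$ — but a single extension only covers one branch, so instead I should not demand a single $\sigma_m$ per pair but rather observe that $[\rho^j_n]=\bigcup\{[\eta]:\eta\supseteq\rho^j_n,\ |\eta|=km\}$ is a finite union, and enlarge the index set accordingly (replace $\omega\times\omega$ by $\omega\times\omega\times 2^{<\omega}$, or more simply re-enumerate the countably many finite-length strings that arise). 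Third, check cofinality: given $y\in M$, for every $j$ there is $n=n(j)$ with $y\supseteq\rho^j_{n(j)}$, hence $y\supseteq\sigma_m$ for the index $m$ corresponding to $(j,n(j))$ together with the length-$km$ initial segment of $y$; since the indices $m$ obtained this way are unbounded (distinct $j$ give, after the re-enumeration, arbitrarily large $m$ once we arrange the enumeration so that each "block index" $j$ is mapped cofinally), we get $\sigma_m\subseteq y$ for infinitely many $m$. Fourth, discard or pad the finitely many early indices to make $|\sigma_m|=km$ hold for all $m$ with no gaps, which is a routine bookkeeping step.

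The main obstacle is purely combinatorial bookkeeping rather than conceptual: the definition of microscopic only yields a \emph{cover} $M\subseteq\bigcup_n[\sigma_n]$, which a priori guarantees each $y$ is in \emph{some} $[\sigma_n]$, not \emph{infinitely many}. Upgrading "some" to "infinitely many" is exactly where one must apply the microscopic property repeatedly (once for each $j$) and merge the resulting sequences, while simultaneously keeping the length of the $m$-th term pinned at exactly $km$. Getting those two requirements — cofinal hitting of each branch and exact prescribed lengths — to hold for one and the same sequence is the delicate point; everything else (the bijection $\omega\to\omega\times\omega$, extending strings to a prescribed length, absorbing finitely many initial indices) is standard. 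I expect the clean write-up to phrase this as: enumerate all pairs $(j,\eta)$ with $j\in\omega$ and $\eta\in 2^{<\omega}$, and for the $m$-th such pair with $|\eta|\le km$ set $\sigma_m$ to be an extension of the relevant string of length $km$, arranging the enumeration so each $j$ recurs infinitely often; then cofinal hitting is immediate from the cover property applied with block index $j$.
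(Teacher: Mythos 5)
Your high-level diagnosis is right — a single cover only gives "$y$ lies in some $[\sigma_n]$," so you need to interleave infinitely many covers into one sequence while hitting the exact length $kn$ at position $n$ — but the specific mechanism you propose does not go through, and the step you dismiss as "routine bookkeeping" is exactly where the argument breaks.

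The problem is that you apply the microscopic definition with the \emph{same} parameter $k$ for every block $j$, producing strings $\rho^j_n$ of length $kn$. After interleaving via a pairing function, the pair $(j,n)$ lands at an index $m$ that is typically much larger than $n$, so the string you have (length $kn$) is \emph{shorter} than the length $km$ you must produce. That forces you to \emph{extend} $\rho^j_n$, and a single extension discards almost all of $[\rho^j_n]$. Your patch — replace $\rho^j_n$ by all $2^{k(m-n)}$ of its extensions to length $km$ and "enlarge the index set" — is circular: the number of extensions you need depends on the index $m$ you assign, but $m$ is determined by where the (now enlarged) item sits in the enumeration. There is exactly one $\sigma_m$ per $m$, and infinitely many pairs $(j,n)$ compete for each length $km$, so an enumeration of pairs $(j,\eta)$ with "each $j$ recurring infinitely often" does not pin down a construction in which each block $j$ still covers $M$.

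The missing idea is to use \emph{increasing} parameters for the successive covers so that you can \emph{truncate} rather than extend. The paper applies the microscopic definition of $M$ with parameter $2^{j}k$ to get the $j$-th cover $(\tau^{2^j}_n)_n$ with $|\tau^{2^j}_n|=2^j k n$, and then places $\tau^{2^j}_n$ at the index $m=2^j n-(2^{j-1}-1)$ (so the index sets $\{2n\}$, $\{4n-1\}$, $\{8n-3\}$, $\dots$ are pairwise disjoint), restricting $\tau^{2^j}_n$ to its initial segment of length $km\leq 2^j k n$. Truncation preserves coverage: if $\tau^{2^j}_n\subseteq y$ then its initial segment is $\subseteq y$, so for each $y\in M$ and each $j$ you get at least one index $m$ in the $j$-th block with $\sigma_m\subseteq y$, hence infinitely many $m$ overall. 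Replacing "extend with parameter $k$" by "truncate after using parameter $2^j k$" is the step your proposal lacks.
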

\begin{proof}
    Take any $k.$ Since $M$ is microscopic we can cover it with a sequence $(\tau_n^2)$, $|\tau_n^2|=2nk$.  Take $\sigma_{2n}=\tau_n^2$. 
    
    We have also $M\subseteq \bigcup[\tau_n^4]$ where $|\tau_n^4|=4nk.$ Take $\sigma_{4n-1}=\tau_n^4 \upharpoonright (4n-1).$
    
    Next we can take $(\tau_n^8)$ where $|\tau_n^8|=8nk$ and $M\subseteq \bigcup[\tau_n^8]$. Take $\sigma_{8n-3}=\tau_n^8\upharpoonright (8n-3)$.

    We continue with this and get the sequence $(\sigma_n).$
    
    \end{proof}

In \cite{EEEE} there is a simpler characterization of closed null sets, then we can find in \cite{BBB}.

Let $I_n$ be an interval partition of $\omega$ and $S_n \subseteq 2^{I_n}$ such that $\frac{|S_n|}{|I_n|}\leq \frac{1}{2}$ then for every $E\in \mathcal{E}$ we have:

$$E\subseteq \{x:\forall^{\infty}_n x \upharpoonright I_n \in S_n\}.$$

The idea given by this characterization is useful in constructing the set in the following Theorems.

\begin{thm}
    $Micro^* \neq \sigma\mathcal{P}$.
\end{thm}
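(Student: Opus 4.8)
The plan is to show that $Micro^* \neq \sigma\mathcal{P}$ by exhibiting a set that is $\sigma$-porous (in fact porous) but fails to be in $Micro^*$, i.e.\ one that can be added to some microscopic set to fill the whole Cantor space. Recall from the theorem just established that $\mathcal{P} \subseteq Micro^*$; so the interesting inclusion is the other one, and the cleanest way to break the equality is to produce a \emph{single} set in $\sigma\mathcal{P}$ (even a porous set) witnessing $\sigma\mathcal{P} \not\subseteq Micro^*$. Concretely, I would look for a porous set $P$ and a microscopic set $M$ with $P + M = 2^\omega$; then $P \notin Micro^*$ while $P \in \sigma\mathcal{P}$, which gives the strict non-equality.

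The key tool is \Cref{dddddddd}: for any $M \in Micro$ and any $k > 0$ there is a sequence $(\sigma_n)$ with $|\sigma_n| = kn$ such that every $y \in M$ has $\sigma_n \subseteq y$ for infinitely many $n$. Equivalently, if we let $M$ be the set of all $y$ that extend $\sigma_n$ for infinitely many $n$ in some chosen block structure, $M$ is microscopic by construction, and the complement $2^\omega \setminus M$ consists of $y$'s that eventually avoid all the $\sigma_n$'s. So the strategy is: fix a rapidly growing block partition, choose the "forbidden patterns" $\sigma_n$ on those blocks so that (a) the resulting $M$ is microscopic, and (b) every point of $2^\omega$ can be translated into $M$ by an element of a fixed porous set $P$. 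The point is that membership in $M$ only constrains coordinates on the selected blocks, and each block can be "hit" by translating with a suitable element of $2^\omega$ supported on those blocks; making the translating set porous requires that the blocks be sparse enough that a fixed bound $k$ of extra coordinates suffices to escape $P$ at every level.

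In more detail I would: (i) define $P = \{x \in 2^\omega : x(n_j) = 0 \text{ for all } j\}$ or a similar set that forces prescribed values at a sufficiently dense arithmetic-like set of coordinates, so that $P$ is $k$-porous for some fixed $k$ (one checks the porosity condition directly from the gap structure, as in the porous examples earlier in the paper); (ii) arrange the microscopic set $M$ via \Cref{dddddddd} so that for a cofinal set of levels, the constraint defining $M$ only looks at coordinates \emph{inside} the free coordinates of $P$ — then for any $y \in 2^\omega$ one builds a translation $x \in P$ pushing $y$ onto an element extending infinitely many $\sigma_n$, so $y \in x + M \subseteq P + M$; (iii) conclude $P + M = 2^\omega$, hence $P \notin Micro^*$, while $P \in \mathcal{P} \subseteq \sigma\mathcal{P}$. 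I expect the main obstacle to be the bookkeeping in step (ii): one must simultaneously keep $M$ microscopic (which forces the blocks carrying the patterns $\sigma_n$ to have length growing like $kn$ and to be placed along a subsequence of levels) and keep the translating coordinates confined to the "free" positions of the porous set $P$ while still covering every $y \in 2^\omega$. Balancing these — choosing the block partition, the patterns, and the coordinate set of $P$ so all three conditions hold at once — is the delicate part; the porosity of $P$ and the microscopicity of $M$ are each routine once the combinatorial skeleton is fixed.
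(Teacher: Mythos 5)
Your approach is upside down, and the error is visible already in your own opening paragraph. You correctly recall the preceding theorem, which gives $\mathcal{P} \subseteq Micro^{*}$ (any porous set plus any microscopic set fails to cover $2^\omega$), and then you immediately set out to ``look for a porous set $P$ and a microscopic set $M$ with $P+M=2^\omega$.'' But such a pair cannot exist: $P+M = 2^\omega$ for some $M \in Micro$ is precisely the statement $P \notin Micro^{*}$, which contradicts $\mathcal{P} \subseteq Micro^{*}$. Your entire plan, including the sketch in items (i)--(iii), is trying to produce a counterexample to a theorem you yourself cite two sentences earlier. No amount of bookkeeping with block partitions in (ii) will rescue this, since the claim you want at the end of (iii) is false for every porous $P$.

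The direction the paper actually takes is the reverse non-inclusion: it shows $Micro^{*} \not\subseteq \sigma\mathcal{P}$. It builds a set
\[
E = \{\,x \in 2^\omega : \forall^{\infty} n \;\; x \upharpoonright (I_n \setminus J_n) = 0\,\}
\]
on a block structure with $|I_n| = 2n$ and $|J_n| = n$, and proves two claims: (a) $E$ is not $\sigma$-porous, by a fusion argument that diagonalizes against a putative decomposition $\bigcup P_j$ into $k_j$-porous pieces while staying inside the constraint defining $E$; and (b) $E \in Micro^{*}$, i.e.\ $M + E \neq 2^\omega$ for all $M \in Micro$, using the reformulation of microscopicity from \Cref{dddddddd} to build a single point $z$ missed by $M + E$. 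So the witness lives on the $Micro^{*}$ side, not the $\sigma\mathcal{P}$ side. (It is also worth noting that whether $\sigma\mathcal{P} \subseteq Micro^{*}$ holds is not settled by the paper at all; even the weaker version of your plan, using a genuinely $\sigma$-porous but non-porous set, is not known to be available, whereas $Micro^{*} \not\subseteq \sigma\mathcal{P}$ is what the paper establishes constructively.) Some of your combinatorial instincts about block partitions and using \Cref{dddddddd} are in the right neighborhood, but they need to be applied to build $E$ on the $Micro^{*}$ side, not a porous $P$ that escapes $Micro^{*}$.
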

\begin{proof}
    Let $I_n$ be a sequence of consecutive intervals on $\omega$ such that $|I_n|=2n$. Let $J_n\subseteq I_n$ be a subinterval of length $n$, i.e.
   $$I_n=[n(n-1), (n+1)n -1]\text{ and }J_n=[n(n-1), (n-1)(n+1)].$$    
    Set $$E=\{x\in 2^\omega: \forall^{\infty}n \; x\upharpoonright (I_n \backslash J_n)=0 \}.$$

    \begin{claim*}
    $E$ is not $\sigma$-porous.
        
    \end{claim*}

    \begin{proof}
     Take $P\in \sigma\mathcal{P}$. Assume that  $P=\bigcup_{j\in \omega} P_j$ where $P_j$ is $k_j$-porous. Without loss of generality the sequence  $(k_j:\ j\in\omega)$ is increasing. 
     
     Recall that for every $j\in\omega$ and every $\alpha\in 2^{<\omega}$ there exists $\tau^\alpha_j \in 2^{k_j}$ such that $[\alpha^\frown\tau_j] \cap P_j =\emptyset$.

     We want to construct sequence $(\tau_n:n\in\omega)$ satisfying following conditions:
          \begin{enumerate}
   \item $\tau_{n} \subseteq \tau_{n+1}$,
     \item $[\tau_n] \cap P_n =\emptyset$,

         \item if $b\in\bigcup_{l\in\omega}(I_l\backslash J_l) \cap dom(\tau_n)$ then $\tau_n (b)=0$.

         
            \end{enumerate}
     Start with $\alpha_0=\underbrace {0^\frown 0^\frown \cdots ^\frown0}_{(k_0 ^2)}$. Next, take $\tau_0 \supseteq \alpha_0$, $|\tau_0|=|\alpha_0|+k_0$ such that $[\tau_0] \cap P_0=\emptyset$.

    Next in the $n$-th step, let $\alpha_n= \tau_{n-1}^\frown\underbrace {0^\frown 0^\frown \cdots ^\frown0}_{k_n^2 - k_{n-1}^2-k_{n-1}}$.  Next, we take $\tau_n\supseteq\alpha_n$, $|\tau_n|=|\alpha_n|+k_n$ such that $[\tau_n]\cap P_n =\emptyset$.

    Finally, set $t=\bigcup_{n\in\omega}\tau_n$. We get that $t\notin P$, because of $(2)$. Moreover, we obtain that $t \in E$, because of $(3)$.
     
\end{proof}

\begin{claim*}
    For every $M\in Micro$, it holds that $M+E\neq 2^\omega$.
    
\end{claim*}
\begin{proof}

We will use the characterization form \Cref{dddddddd}.

    Fix $M\in Micro$. Then there exists $(\sigma_m)_{m<\omega}$ such that $\sigma_m \in 2^{3m}$ and
     $$M\subseteq \{x: \exists^{\infty }m\; x\upharpoonright3m=\sigma_m\}=\hat{M}$$
    

    We will define $z\notin E+\hat{M}$. For $i\in \bigcup_{n=0}^\infty J_n$, set $z(i)=0$.

   Define $h_k\in (I_{j+1}\backslash J_{j+1})$ where 
   
  $$h_k=\left(j+1\right)^2 +\left(k - \frac{j\left(j+1\right)}{2}\right) -1$$
   
and $j$ is maximal such that $\frac{j(j+1)}{2}<k$. 

Note that $h_k$ is the first place where for $y\in E $, $y(h_k)$ can be something different than $0$.

Define $z(h_k)$ such that $z(h_k)\neq\sigma_k(h_k).$ For all $m$ there exists $n$ and $h\in (I_n \backslash J_n) \cap 3m $ we get $z(h)\neq \sigma_ m(h)+0$.  Notice that $z\notin E+\hat{M}$.  \end{proof}\end{proof}

Let us remark that the consistent example to the next Theorem is Sierpiński set. It was given by Arturo Martinez Celis at Wrocław seminar.

\begin{thm}
    $Micro\neq \sigma\mathcal{P}^{*}$.
\end{thm}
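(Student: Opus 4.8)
The plan is to reduce the whole statement, via the operation $*$, to the equality $(\sigma\mathcal P)^{**}=\sigma\mathcal P$ together with the theorem just proved. Indeed, suppose toward a contradiction that $Micro=(\sigma\mathcal P)^{*}$; applying $*$ and using $(\sigma\mathcal P)^{**}=\sigma\mathcal P$ we would get $Micro^{*}=\big((\sigma\mathcal P)^{*}\big)^{*}=(\sigma\mathcal P)^{**}=\sigma\mathcal P$, contradicting $Micro^{*}\neq\sigma\mathcal P$. So it suffices to show $(\sigma\mathcal P)^{**}=\sigma\mathcal P$, which by \Cref{ccc} amounts to: for every $A\notin\sigma\mathcal P$ there is $B\in(\sigma\mathcal P)^{*}$ with $A+B=2^{\omega}$.

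To build such a $B$ I would run the transfinite recursion of \Cref{bbbbb} with $\mathcal J=\sigma\mathcal P$: $\sigma\mathcal P$ is a translation invariant (and, in $2^\omega$, automatically reflection invariant) proper $\sigma$-ideal, and $cof(\sigma\mathcal P)\le\mathfrak c$ since every $\sigma$-porous set lies inside an $F_\sigma$ $\sigma$-porous set. Fixing a basis $\{F_\alpha:\alpha<\mathfrak c\}$ of $\sigma\mathcal P$ and an enumeration $\{z_\alpha:\alpha<\mathfrak c\}$ of $2^\omega$, one constructs $(x_\alpha),(r_\alpha)$ with $z_\lambda\in x_\lambda+A$, with $r_\lambda$ avoiding all $F_{\alpha_1}+x_{\alpha_2}$ ($\alpha_1,\alpha_2<\lambda$), and with $x_\lambda$ avoiding all $r_{\alpha_1}-F_{\alpha_2}$ ($\alpha_1,\alpha_2\le\lambda$); then $B=\{x_\alpha:\alpha<\mathfrak c\}$ satisfies $A+B=2^{\omega}$ and $B\in(\sigma\mathcal P)^{*}$ exactly as in the proof of \Cref{bbbbb}. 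The point where that proof uses $add(\mathcal J)=\mathfrak c$ — namely to see that $\bigcup_{\alpha_1,\alpha_2<\lambda}(F_{\alpha_1}+x_{\alpha_2})\neq 2^{\omega}$ and that $2^{\omega}\setminus(z_\lambda-B_\lambda)\in\mathcal J$ — is the main obstacle, because $add(\sigma\mathcal P)$ need not be $\mathfrak c$. I would try to bypass it using the structure forced by $A\notin\sigma\mathcal P$: such an $A$ is "nowhere-dense-fat" at cofinally many scales (as in the $\mathcal P_u\setminus\sigma\mathcal P$ example constructed above), so each $z_\lambda$ can be absorbed into a suitable translate $x_\lambda+A$ while still escaping the $\lambda$-many $\sigma$-porous sets handled so far. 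I expect this successor step — replacing the hypothesis $add(\mathcal J)=\mathfrak c$ by the porosity-theoretic size of $A$ — to be the technical heart; under CH one has $add(\sigma\mathcal P)=\omega_1=\mathfrak c$ and \Cref{bbbbb} applies verbatim.

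Finally I would record the consistent witness alluded to before the statement, which also confirms the direction of the inequality: under CH a Sierpiński set $S$ is strongly meager, so $S\in\mathcal N^{*}=\mathcal{SM}$, and since $\sigma\mathcal P\subseteq\mathcal N$ we have $\mathcal{SM}=\mathcal N^{*}\subseteq(\sigma\mathcal P)^{*}$, hence $S\in(\sigma\mathcal P)^{*}$; on the other hand $S\notin\mathcal N$ (else $S\cap S$ would be countable) and $Micro\subseteq\mathcal N$, so $S\notin Micro$. Thus $S\in(\sigma\mathcal P)^{*}\setminus Micro$, giving $Micro\neq(\sigma\mathcal P)^{*}$ outright in any model of CH; the ZFC argument sketched above is meant to make this independent of CH, and the only genuinely hard part there is pushing the recursion of \Cref{bbbbb} through without the additivity assumption.
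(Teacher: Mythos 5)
Your reduction is clean in spirit: if $(\sigma\mathcal{P})^{**}=\sigma\mathcal{P}$, then $Micro=(\sigma\mathcal{P})^{*}$ would force $Micro^{*}=\sigma\mathcal{P}$, contradicting the preceding theorem. But the crux — establishing $(\sigma\mathcal{P})^{**}=\sigma\mathcal{P}$ in ZFC — is exactly what you do not prove, and you are honest about this. \Cref{bbbbb} genuinely needs $\add(\mathcal{J})=\mathfrak{c}$ at two points in the recursion (to keep $\bigcup_{\alpha_1,\alpha_2<\lambda}(F_{\alpha_1}+x_{\alpha_2})$ from being everything, and to keep $2^{\omega}\setminus B_{\lambda}$ inside $\mathcal{J}$), and $\add(\sigma\mathcal{P})=\mathfrak{c}$ is not a ZFC theorem. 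Your suggestion to exploit the "nowhere-dense-fat" structure of $A\notin\sigma\mathcal{P}$ at the successor step is only a hope, not an argument: nothing in it controls the \emph{union} over $\lambda$-many translates of $\sigma$-porous sets, which is the thing that can become cofinite once $\lambda\geq\add(\sigma\mathcal{P})$. So what you have is a valid proof under CH (both via \Cref{bbbbb} applied to $\sigma\mathcal{P}$, and via the Sierpiński set, the latter using that Sierpiński sets are strongly meager and $\sigma\mathcal{P}\subseteq\mathcal{N}$), but the ZFC statement remains unproved by your route.

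The paper avoids this entirely by a direct, absolute construction: it exhibits a concrete closed set $E=\{y:(\forall m)(\forall i<3^{m})\;y(3^{m}-1+i)=0\}$ and shows by two explicit fusion arguments that $E\notin Micro$ (using the covering characterization from \Cref{dddddddd}) and that $X+E\neq 2^{\omega}$ for every $X\in\sigma\mathcal{P}$, so $E\in(\sigma\mathcal{P})^{*}\setminus Micro$ outright in ZFC. This is not a reduction through $\mathcal{F}^{**}=\mathcal{F}$ at all; it bypasses the additivity problem you identified. If you want to complete your approach rather than switch to the paper's, you would need to resolve whether $(\sigma\mathcal{P})^{**}=\sigma\mathcal{P}$ holds in ZFC — which is a separate (and, as far as this paper shows, open) question — whereas the direct witness settles the theorem unconditionally.
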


\begin{proof}

    Let us define $E=\{y:\; (\forall m)(\forall i<3^m)\;  y(3^m -1+i)=0\}$.

    \begin{claim*}

        $E\notin Micro$.
    \end{claim*}

\begin{proof}

    Fix $M\in Micro$. Then by characterization from \Cref{dddddddd} there exists $(\sigma_m)_{m<\omega}$ such that $\sigma_m \in 2^{5m}$ and $M\subseteq \{x: \exists^{\infty }m\; x\upharpoonright5m=\sigma_m\}=\hat{M}$.

Let $Z=\{n: \exists m \exists i<3^m  n=3^m-1+i\}$. Take set $\omega\backslash Z=\{k_n: n\in\omega \}$ where the sequence $(k_n  )_{n\in\omega}$ is increasing. Notice that $k_n<5n$. Indeed:
\begin{enumerate}
    \item if $2\cdot 3^k -1 \leq 5n < 3^{k+1}$
    
We have
    $|Z \cap 5n|= 1+..+3^k = \frac{3^{k+1} -1}{2}$. 
    
   To show that $k_n<5n$ we have to prove that $4n>\frac{3^{k+1}-1}{2}$. We get $8n>3^{k+1}-1$ which is true, because  $2\cdot 3^k -1 \leq 5n$.

    \item if $2\cdot 3^k -1 >5n \geq 3^k$
    
We have
    $|Z \cap 5n|= 1+..+3^{k-1} +( 5n -3^k +1)$.
    
    To show that $k_n<5n$  we have to prove that $4n>\frac{3^{k}-1}{2} +5n -3^k +1$. We get $n<3^k -1 -\frac{3^k -1}{2}$. And $5n<5\cdot \frac{3^k}{2} -\frac{5}{2}$ which is true, because  $2\cdot 3^k -1 > 5n$.

\end{enumerate}

 We want to construct $z\in 2^\omega$ satisfying following conditions:

     \begin{enumerate}

   \item $z(n)=0$ if $n\in Z$,
     \item $z(k_n)\neq \sigma_n (k_n)$. 

     \end{enumerate}






    Because of $(1)$ we have that $z\in E$ and $z\notin \hat{M}$ because of $(2)$. 
    
\end{proof}

\begin{claim*}

    For any $X\in \sigma\mathcal{P}$ we have $X+E\neq 2^\omega$.
    
\end{claim*}

\begin{proof}
     Let $X=\bigcup P_k$, where $P_k $ is $3^{p_k}$-porous and $p_k$ is increasing. We want to construct sequence $(\tau_n:n\in\omega)$ satisfying following conditions:

     \begin{enumerate}

   \item $\tau_{n} \subseteq \tau_{n+1}$,
     \item $[\tau_n] \cap P_n =\emptyset$,

         \item $\tau_n(x)=0$ if $x\neq3^m-1+i$ for some $m$ and $i<3^m$.

     \end{enumerate}

  First we take $\alpha_0= {\underbrace {0^\frown 0^\frown  \ldots^\frown 0}_{2\cdot 3^{k_0}-1 }}  $. Take $\tau_0 \supseteq \alpha_0$, $|\tau_0|=|\alpha_0|+3^{k_0}$ such that $[\tau_0] \cap P_0=\emptyset$.

  Next, take $\alpha_1 =\tau_0 ^\frown{\underbrace {  0 ^\frown 0^\frown\ldots^\frown 0}_{2\cdot3^{k_1} - 3\cdot 3^{k_0}}}$. Take $\tau _1 $, $\tau_1 \supseteq \alpha_1$, $|\tau_1|=|\alpha_1|+3^{k_1}$ such that  $[\tau_1]\cap P_1=\emptyset$.

  In $n$-th step let $\alpha_n =\tau_{n-1} ^\frown{\underbrace{ 0^\frown 0^\frown\ldots^\frown 0}_{2\cdot 3^{k_n} -3\cdot 3^{k_{n-1}}}}$. Take $\tau_n$, $\tau_n \supseteq \alpha_n$, $|\tau_n|=|\alpha_n|+3^{k_n}$ such that $[\tau_n]\cap P_n = \emptyset$.

    Finally, take set $t=\bigcup_{n\in\omega}\tau_n$. We get that $t\notin X+E$, because of points $(2)$ and 
    $(3)$.
    \end{proof}\end{proof}

\printbibliography

@article{porowate,

year = {1967},

author = {E P Dolženko},
title = {Boundary properties of arbitrary functions},
journal = {Mathematics of the USSR-Izvestiya},

}

@article{M,

year = {2017},

author = {A.Karasińska,A.Paszkiewicz,E.Wagner-Bojakowska},

title = {A generalization of the notion of microscopic sets},
journal = {Lithuanian Mathematical Journal},
}

@article{PpP,

year = {1987},

author = {J.Väisälä},
title = {Porous sets and quasisymmetric mappings},
journal = {Trans. Amer. Math. Soc.},
}

@article{hruvsak2012cardinal,
  title={Cardinal invariants of monotone and porous sets},
  author={Hru{\v{s}}{\'a}k, Michael and Zindulka, Ond{\v{r}}ej},
  journal={The Journal of Symbolic Logic},

  year={2012},
  
}

@article{BCBC,

year = {2012},

author = {M.Goldstern, S.Shelah, W.Wohofsky},

title = {Borel conjecture and dual Borel conjecture},
journal = {Transactions of the American Mathematical Society},
}

@article{unknown,
author = {O. Zindulka},
year = {2019},
  journal={arXiv: Classical Analysis and ODEs},

title = {Typical behavior of lower scaled oscillation}
}

@book{BBB,

year = {1995},

author = {H.Judach, T.Bartoszyński},
title = {Set Theory: on the structure of the real line},
publisher = { Taylor \& Francis Inc},
year = {1995}

}

@article{dBC,
	title = {Strong measure zero and strongly meager sets},
	year = {1993},
	journal = {Proceedings of the American Mathematical Society},
	author = {T.Carlson }
}

@article{BC,
	title = {On the consistency of Borel's conjecture},
	year = {1976},
	journal = {Acta Mathematica},
	author = {R.Laver }
}

@article{EEEE,
	title = {On algebraic sums, trees and ideals in the Cantor space},
	year = {2025},
	journal = {https://arxiv.org/abs/2405.13775},
	author = {M.Michalski, R.Rałowski, S.Żeberski }
}

@article{Nowik,
	title = {$\sigma$-porosity of certain ideals},
	year = {2025},
	journal = {preprint},
	author = {P.Klinga, A.Nowik, A.Wąsik }
}

@article{Wohofsky,
	title = {Special sets of real numbers and variants of the Borel Conjecture},
    journal = {PhD Thesis},
	year = {2013},
	author = {W.Wohofsky }
}

@article{Pawlikowski2008-PAWTS,
	title = {Two Stars},
	year = {2008},
	journal = {Archive for Mathematical Logic},
	author = {J. Pawlikowski and M. Sabok}
}

@article{Kysiak,
 author = {M. Kysiak and T. Weiss},
 journal = {American Mathematical Society},
 title = {Small Subsets of the Reals and Tree Forcing Notions},
 year = {2004}
}

@article{solecki,
author = {S. Solecki},
year = {2003},
title = {Translation invariant ideals},
journal = {Israel Journal of Mathematics},
}

@article{article,
author = {G. Horbaczewska, S. Lindner},
year = {2018},


title = {On sets which can be moved away from sets of a certain family},

journal = {Journal of Mathematical Analysis and Applications},

}

@article{KWELA201651,
title = {Additivity of the ideal of microscopic sets},
journal = {Topology and its Applications},

year = {2016},
author = {Adam Kwela}
}

@article{galvin1973strong,
  title={Strong measure zero sets},
  author={F. Galvin, J. Mycielski, R. Solovay},
  journal={Notices Amer. Math. Soc},
  year={1973}
}

@article{Seredyski1989SomeOR,
  title={Some operations related with translation},
  author={W. Seredyński},
  journal={Colloquium Mathematicum},
  year={1989},

  
  
}

\end{document}